\newcommand{\A}{\mathbb{A}}
\newcommand{\BGtorus}[1]{(\A^N \setminus \setl{0})^{#1}}
\newcommand{\C}{\mathbb{C}}
\newcommand{\cadivQ}{\cadiv_{\Q}}
\newcommand{\Deltap}{\Delta_P}
\newcommand{\dfS}{\mathcal{S}}
\newcommand{\dual}[1]{{#1}^{\vee}}
\newcommand{\gitquot}{\operatorname{/\!\!/}}
\newcommand{\glsec}[2]{\Gamma\left( #1,\,#2 \right)}
\newcommand{\inpr}[2]{\left\langle #1,\, #2 \right\rangle}
\newcommand{\locD}{\text{Loc}(D)}
\newcommand{\N}{\mathbb{N}}
\newcommand{\orth}[1]{#1^{\perp}}
\newcommand{\plD}{\mathfrak{D}}
\newcommand{\Pol}[2]{\Polsymb(#1, #2)}
\newcommand{\PolNs}{\Pol{N}{\sigma}}
\newcommand{\Q}{\mathbb{Q}}
\newcommand{\relSpec}{\textbf{\textup{Spec}}}
\newcommand{\restrict}[2]{\left.#1\right|_{#2}}
\newcommand{\set}[2]{\left\{#1\,\middle|\,#2\right\}}
\newcommand{\setl}[1]{\left\{#1\right\}}
\newcommand{\shf}[1]{\mathcal{A}}
\newcommand{\shfA}{\shf{A}}
\newcommand{\strshf}[1]{\mathscr{O}_{#1}}
\newcommand{\tupI}{\mathscr{T}_I}
\newcommand{\tvar}[1][T]{\ensuremath{#1}-\text{variety}}
\newcommand{\tvars}[1][T]{\ensuremath{#1}-\text{varieties}}
\newcommand{\Z}{\mathbb{Z}}
\newcommand{\Nprime}{N^\prime}
\newcommand{\NQ}{N_\Q}
\newcommand{\MQ}{M_\Q}
\newcommand{\Xip}{\Xi_P}
\newcommand{\Pl}{\mathbb{P}}
\DeclareMathOperator{\cadiv}{CaDiv}
\DeclareMathOperator{\Polsymb}{Pol}
\DeclareMathOperator{\relint}{rel\,int}
\DeclareMathOperator{\SL}{SL}
\DeclareMathOperator{\Spec}{Spec}
\DeclareMathOperator{\tail}{tail}
\DeclareMathOperator{\WDiv}{WDiv}
\numberwithin{equation}{subsection}
\theoremstyle{plain}
\newtheorem{theorem}[equation]{Theorem}
\newtheorem{lemma}[equation]{Lemma}
\newtheorem{proposition}[equation]{Proposition}
\theoremstyle{definition}
\newtheorem{definition}[equation]{Definition}
\newtheorem{notation}[equation]{Notation}
\theoremstyle{remark}
\newtheorem{remark}[equation]{Remark}
\newtheorem{example}[equation]{Example}
\title{
  Equivariant Chow groups of a complexity one \tvar{}\footnote{MSC: 
      14C15 (1973-now) (Equivariant) Chow groups and rings; motives;
      14M25 (1991-now) Toric varieties, Newton polyhedra, Okounkov bodies;
      52B20 (1991-now) Lattice polytopes in convex geometry (including relations with commutative algebra and algebraic geometry)
  }
}
\author{
  Pavankumar Dighe\footnote{\textit{Email:} \texttt{pavankumardighe@gmail.com}}
  \and
  Vivek Mohan Mallick\footnote{\textit{Email:} \texttt{vmallick@iiserpune.ac.in}}
}
\begin{document}
\maketitle
\begin{abstract}
This paper provides a computation of the equivariant Chow group of a rational, complete, complexity one $T$-variety.
\end{abstract}

\section{Introduction}

A \tvar{} is an algebraic variety along with an effective action of an algebraic torus $T$.
In this paper, we study the $T$-equivariant Chow groups of such varieties.

The study of equivariant invariants of varieties have a rich history;
see, for example, Timashev \cite{timashev:gmanifoldscomp1,timashev:comp1torusact}.
In this paper, we are interested in \tvars{}, and our approach
follows that of Altmann, Hausen and Suss \cite{ah:affinetvar, ahs:gentvar}.
In this approach, the \tvars{} are described in terms of pp-divisors, which are 
generalizations of usual divisors, whose coefficients are polyhedra satisfying some properties.
Quite a lot is known about these spaces, specially if the \tvar{} is of complexity one,
i.e.\ when the dimension of the torus is one less than the dimension of the \tvar{}.
The article \cite{aipsv:geomtvar} provides an comprehensive summary of what is
known about the geometry of \tvars{} including divisors,
cohomology of line bundles, intersection theory, etc.
Their topology, including a computation of Hodge-Deligne numbers, cohomology ring and
fundametal groups were studied in \cite{lafaceliendomoraga:topologyTvar}.
Vector bundles over \tvars{} with compatible torus actions have also been studied by
Ilten and Su{\ss} \cite{iltensuss:eqvectbundle}.
The algebro-combinatorial data describing \tvars{} make them a natural source of examples
where one can hope to compute various properties.

A natural question is to ask if one can compute various invariants for these varieties.
One such important invariant is the Chow group which also acts as a natural habitat for
characteristic classes like Chern classes to live in.
This is an old question (see, for example, the work on chow groups of $\SL(2)$ embeddings
by Moser-Jauslin \cite{moserjauslin:chowringsl2},
and Gonzales \cite{gonzales:eqopchowring}.
Under certain conditions, N{\o}dland computed Chow groups of \tvars{} \cite{Ndland2021}.
It was further studied by Botero \cite{botero:chowringTvar}.

Since we are in an equivariant setting, a natural question is to ask if we can compute
equivariant versions of these invariants.
Equivariant Chow groups, modelled after the equivariant cohomology of Borel, 
were defined by Edidin and Graham \cite{eg:equivintth} extending previous work done by
Briney, Gillet and Vistoli.
We shall describe a way to compute these groups.

The paper is organized as follows.
Section \ref{sec:prlmtveq} is a summary of notions we need for this paper.
The theory of \tvars{} is reviewed in the subsection \ref{ssc:tvarieties}, followed by
a quick recall of equivariant Chow groups in subsection \ref{ssc:eqchowgp}.
Section \ref{sec:esteqchg} gives the main results leading to the computation of the
equivariant Chow groups using Ilten and Vollmert's technique of 
downgrading \cite{iltenvollmert:upgrading}.
This reduces the problem to computation of usual Chow groups of certain \tvars{}.
This is done in section \ref{sec:teqchtvr} following N{\o}dland \cite{Ndland2021}.
In section \ref{sec:rkvktkcn}, we an analogue of a result of N{\o}dland 
\cite[Proposition 6.3]{Ndland2021} for equivariant Chow groups (see \ref{pro:rvksumcn}).
We end the paper by computing the equivariant versions numbers involved for an example
already considered by N{\o}dland in the non-equivariant setting.

\subsection*{Acknowlegements}
The first author thanks UGC (UGC-Ref.No: 1213/(CSIR-UGC NET JUNE 2017) ) for funding the research
and IISER Pune for providing facilities. The second author thanks IISER Pune for providing 
an excellent environment to conduct this research.

\section{Some preliminaries} \label{sec:prlmtveq}
In this section, we review some facts about $T$-varieties and equivariant Chow groups.
\subsection{\tvars{}}
\label{ssc:tvarieties}

$T$-varieties are normal algebraic varieties with an effective torus action.
 These varieties can be described by a combination of algebraic geometry and  combinatorial data. A \emph{tail cone}, $\tail(\Delta)$, of any polyhedron $\Delta \subset
\NQ$ is defined as
\begin{equation*}
  \tail(\Delta) := \set{v \in \NQ}{v + \Delta \subseteq \Delta}.
\end{equation*}
Fix a strongly convex integral polyhedral cone $\sigma$ in $\NQ$.
The polyhedra with tail cone $\sigma$ form a semigroup under Minkowski sum,
with $\sigma$ as its additive identity. We denote a semigroup by $\text{Pol}^+(N,\sigma)$.
Let $\PolNs$ be the associated Grothendieck group.
A \emph{polyhedral divisor} on a normal variety $Y$ with a tail cone $\sigma$ is a formal sum
$\plD = \sum_i \Delta_i \otimes D_i$ where $D_i$ runs over all prime
divisors of $Y$ and the coefficients $\Delta_i \in \PolNs$; that is $\plD
\in \PolNs \otimes \WDiv(Y)$, where $\WDiv(Y)$ is the group of Weil divisors
on $Y$.

Each polyhedral devisor defines an \emph{evaluation map}  
\begin{equation} \label{domainoflin}
  \plD \colon \sigma^\vee \to \WDiv_\Q (Y)
\end{equation}
where $\WDiv_\Q(Y) = \WDiv(Y) \otimes_{\Z} \Q$.
This is defined by 
\begin{equation*}
  \plD(u) := \sum_i \biggl( \min_{v \in \Delta_i} \inpr{u}{v} \biggr) D_i.
\end{equation*}
The evaluation map, $\plD$, is a piecewise linear convex function.
In fact, there is a bijection between polyhedral divisors with a fixed tail
cone $\sigma$ and convex piecewise linear maps  $ \sigma^\vee \to
\WDiv_\Q(Y)$ (cf \cite[proposition 1.5]{ah:affinetvar}).

\begin{definition}
A  polyhedral divisor $\plD$ is \emph{semiample} if and only if $\plD(u)$ is
  semiample for all $ u \in \sigma^\vee$.
\end{definition}

\begin{definition}
  Let $Y$ be a normal variety and let $\sigma \subset \NQ$ be a pointed
  cone.
  A \emph{proper polyhedral divisor} or a \emph{pp-divisor} on $(Y, N)$ with respect
  to $\sigma$ is a semiample polyhedral divisor $\plD = \sum_i \Delta_i
  \otimes D_i $ such that $D_i$ is an effective Weil divisor, $\Delta_i \in \text{Pol}^+(N,\sigma)$ with only finitely many are different from $\sigma$, and for every
  $u \in \sigma^\vee,$ $\plD(u)$ is a rational Cartier divisor on $Y$, which
  is big whenever $u$ belongs to the relative interior of $\sigma^{\vee}$.
\end{definition}

\begin{definition}
  \label{def:afftvarpld}
  To a pp-divisor $\plD$, one associates an affine scheme as follows.
  Let, for $u \in \dual{\sigma} \cap M$, $\shfA_u = \strshf{Y}(\plD(u))$.
  Then, $\shfA = \bigoplus_{u \in \dual{\sigma} \cap M} \shfA_u$ is a sheaf
  of $\strshf{Y}$ algebras. Define, the \emph{affine scheme associated with}
  $\plD$ to be
  \begin{equation*}
    X(\plD) = \Spec \left( \bigoplus_{u \in \dual{\sigma} \cap M}
    \glsec{Y}{\shfA_u} \right) = \Spec \glsec{Y}{\shfA}.
  \end{equation*}
  
\end{definition}

\begin{theorem}{\cite[Theorems 3.1 and 3.4]{ah:affinetvar}}
  \label{thm:affinetvar}
  Let $Y$ be a normal, semiprojective variety.  Fix a lattice $N$, its dual
  $M$, and let $\sigma$ be a strongly convex polyhedral cone in $\NQ$. Given
  a pp-divisor $\plD$ on $(Y, N)$ consider the affine scheme $X = X(\plD)$
  associated to $\plD$. Also, define
  \begin{equation*}
    T = \Spec \C[M] \qquad \text{ and } \qquad \tilde{X} = \tilde{X}(\plD) =
    \relSpec_Y \shfA 
  \end{equation*}
  where $\relSpec_Y \shfA$ is the relative spectrum of the sheaf of algebras
  $\shfA$ over $Y$. Then (See \cite[theorems 3.1 and 3.4]{ah:affinetvar})
  both $X$ and $\tilde{X}$ are normal varieties of dimension $\dim Y + \dim
  T$ and admit an effective $T$-action. There are two canonical maps: $\pi \colon
  \tilde{X} \longrightarrow Y$ which is a good quotient and a proper
  birational $T$-equivariant contraction morphism $r \colon \tilde{X}
  \longrightarrow X$. The variety $Y$ is the Chow quotient (cf \cite{MR1119943}) of $X$ under the
  action of $T$.
  \[
    \begin{tikzcd}
      \tilde{X} \arrow[r, "r"] \arrow[dr,"\pi"']  &
      X  \\
      & Y.
    \end{tikzcd}
  \]
  The category of affine $T$-varieties and dominant $T$-equivariant morphisms is
  equivalent to the category of pp-divisors along with properly defined
  morphisms (cf \cite[theorem 8.8 and corollary 8.14]{ah:affinetvar}).
\end{theorem}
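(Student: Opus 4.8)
The statement is Theorems~3.1 and 3.4 of \cite{ah:affinetvar}, so the plan is to retrace the structure of Altmann and Hausen's argument. Write $\shfA = \bigoplus_{u \in \dual{\sigma} \cap M} \shfA_u$ with $\shfA_u = \strshf{Y}(\plD(u))$. The superadditivity $\plD(u) + \plD(u') \le \plD(u+u')$ of the evaluation map yields multiplication maps $\shfA_u \cdot \shfA_{u'} \subseteq \shfA_{u+u'}$, so that $\shfA$ is a sheaf of $M$-graded $\strshf{Y}$-algebras; since it sits inside the constant sheaf $\C(Y)[M]$ it is a sheaf of integral domains, and likewise $A := \glsec{Y}{\shfA}$ is a domain. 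Hence $\tilde{X} = \relSpec_Y \shfA$ and $X = \Spec A$ are integral, the $M$-grading induces $T$-actions on both, the action is effective because the generic fibre of $\pi \colon \tilde{X} \to Y$ is the affine toric variety attached to $\sigma$, base-changed to $\C(Y)$ (here one uses that $\sigma$ is pointed), and that same generic fibre gives $\dim \tilde{X} = \dim Y + \dim T$; once $r$ is seen below to be birational, also $\dim X = \dim Y + \dim T$.

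The first substantial point is that $A$ is a finitely generated $\C$-algebra. I would choose generators $u_1, \dots, u_r$ of the semigroup $\dual{\sigma} \cap M$; semiampleness of each $\plD(u_j)$ on the semiprojective variety $Y$ guarantees that the section ring of $\plD(u_j)$ is a finitely generated $\C$-algebra, and one then assembles these finitely many graded strands into a single finitely generated $M$-graded algebra, using semiprojectivity of $Y$ together with the bigness of $\plD(u)$ for $u$ in the relative interior of $\dual{\sigma}$. I expect this to be the main obstacle: it is precisely the step where the properness conditions built into the definition of a pp-divisor are consumed. Normality of $A$ (and of $\shfA$) is a separate matter, settled via the explicit description that a homogeneous element $f\chi^u$ lies in $\shfA$ if and only if $\operatorname{ord}_{D_i}(f) + \inpr{u}{v} \ge 0$ for every $i$ and every vertex $v$ of $\Delta_i$; each such inequality cuts out an integrally closed (Rees-type) subalgebra of $\C(Y)[M]$, and an intersection of integrally closed subalgebras is integrally closed. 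This is exactly where the coefficients being polyhedra, rather than arbitrary $\Q$-divisors, enters.

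Granting finite generation and normality, the remaining geometric assertions are comparatively formal. By construction $\pi$ is an affine $T$-invariant morphism with $(\pi_* \strshf{\tilde{X}})^T = \shfA_0 = \strshf{Y}(\plD(0)) = \strshf{Y}$, so it is a good quotient. The morphism $r \colon \tilde{X} \to X$ is the canonical map from a variety to the spectrum of its global regular functions, whence $r_* \strshf{\tilde{X}} = \strshf{X}$, i.e.\ $r$ is a contraction; it is proper because $Y \to \Spec \glsec{Y}{\strshf{Y}}$ is projective and semiampleness of the divisors $\plD(u)$ upgrades this to properness of $r$, and it is birational because it restricts to an isomorphism over the open subset of $X$ on which the generic orbit structure is visible. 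To identify $Y$ with the Chow quotient of $X$ by $T$, one recognizes the fibres of $\pi$, after the modification $r$, as the closures of the generic $T$-orbits, organized into a family whose base inside the Chow variety is the normal variety $Y$; here I would invoke \cite{MR1119943}.

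Finally, for the equivalence of categories I would exhibit the quasi-inverse functor explicitly: to an affine $T$-variety $X$ one attaches its Chow quotient $Y$ and, for each prime divisor $D \subset Y$, a polyhedron $\Delta_D$ read off from the orders of vanishing of $T$-homogeneous rational functions along the divisorial valuation lying over $D$, assembling $\plD = \sum_D \Delta_D \otimes D$; one then checks that $\plD$ is a pp-divisor, that the two constructions are mutually inverse on objects, and that they carry dominant $T$-equivariant morphisms to morphisms of pp-divisors and back. This bookkeeping is lengthy but essentially routine once the object-level correspondence is in place, and I would defer its details to \cite{ah:affinetvar}.
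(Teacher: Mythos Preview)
The paper does not supply its own proof of this theorem: it is stated as a background result with an explicit citation to \cite[Theorems~3.1 and~3.4]{ah:affinetvar} (and to \cite[Theorem~8.8, Corollary~8.14]{ah:affinetvar} for the categorical equivalence), and no proof environment follows. So there is nothing in the present paper to compare your argument against.

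That said, your sketch is a faithful outline of the Altmann--Hausen argument: the $M$-grading on $\shfA$ from superadditivity of $\plD$, integrality and normality via the valuative description inside $\C(Y)[M]$, finite generation from semiampleness together with semiprojectivity and bigness, $\pi$ a good quotient because $\shfA_0 = \strshf{Y}$, and $r$ the Stein-type contraction to $\Spec$ of global sections. For the purposes of this paper, simply citing \cite{ah:affinetvar} (as the authors do) is the intended ``proof''; your write-up would be appropriate as an expository expansion but is not required here.
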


From \cite{ahs:gentvar}, we are going to recall gluing of affine $T$-varieties of complexiy one . To describe combinatorial data for a non-affine $T$-variety we allow empty coefficients in $\plD$. For a polyhedral divisor $\plD=\sum \Delta_D\otimes D$ on a curve $Y$ where $D$ denote  Weil divisor on curve $Y$, where some $\Delta_D$'s can be $\phi$. Such a $\plD$ is said to be a pp-divisor on $Y$ if $\plD|_{\locD}$ is pp-divisor on $\text{Loc}(\plD)$ where $\text{Loc}(\plD)= Y\setminus\bigcup_{\Delta_D = \phi}D$.

\begin{definition} Let $\plD=\sum_p \Deltap \otimes P$, $\plD^\prime = \sum_P \Deltap^\prime \otimes P$ be two polyhedral divisor on $Y$ with tail cone $\sigma$ and $\sigma^\prime$.
    \begin{itemize}
        \item Intersection of polyhedral divisors is defined by
        \[\plD \cap \plD^\prime = \sum_P (\Deltap \cap \Deltap^\prime) \otimes P.\]
        \item For $p \in Y $ slice associated to $p$ is defined by $\plD_p=\underset{P  :  p \in P}{\sum} \Deltap$.
        \item We say $\plD^\prime \subset \plD$ if $\Deltap^\prime \subset \Deltap$ for every $P \in Y$
   
    \item For a proper polyhedral divisor $\plD^\prime \subset \plD$  we have a dominant morphism $X(\plD^\prime) \to X(\plD)$. If this map is an open embedding, we call  $\plD^\prime$ is a \emph{face} of $\plD$ and it is denoted by $\plD^\prime \prec \plD$.
 \end{itemize}
\end{definition}

\begin{definition}
    A \emph{divisorial fan} is a finite set $\dfS$ of proper polyhedral divisors on $Y$ such that for $\plD^\prime, \plD \in\dfS$, we have $ \plD \cap \plD^\prime  \in \dfS $ and $\plD^\prime \succ \plD \cap \plD^\prime \prec \plD$. 
\end{definition}
\begin{definition}
    A proper polyhedral divisor $\dfS$ is \emph{contraction free} if $\text{Loc}(\plD)$ is affine and a divisorial fan $\dfS$ is contraction free if for all $\plD \in \dfS$, $\text{Loc}(\plD)$ is affine. This is equivalent to the statement that the contraction map $r$ is an isomorphism.  
\end{definition}

\begin{definition}
  \label{def:ppdivsor}
  Suppose a \tvar{} is described by $(Y, \dfS)$. We shall call $Y$ the
  \emph{the space of the pp-divisor}.
\end{definition}

Since we are interested in complexity one $T$-variety, we shall recall some notions from  \cite{iltenpolarized}. For a complete complexity one $T$-variety, the combinatorial description is completely determined by the slices. 
 \begin{definition}
     A \emph{marked fansy divisor} on a curve $Y$ is a formal sum $\Xi = \sum \Xip.P$
     with a fan $\Sigma$ and some subset $C \subset \Sigma$, such that 
     \begin{enumerate}
         \item $\Xip$ is complete polyhedral subdivision of $\NQ$ and tail($\Xip)=\Sigma$ for all $P \in Y$.
         \item For full-dimensional $\sigma \in C$ the polyhedral divisor 
         $D^\sigma =\sum \Deltap^\sigma \otimes P$ is proper, where $\Deltap^\sigma$ is a unique element of $\Xip$ with tail cone $\sigma$.
         \item For $\sigma \in C$ of a full dimension and $\tau \prec \sigma$ we have $\tau \in \Sigma$ if and only if deg$D^\sigma \cap \tau \neq \phi$.
          \end{enumerate}
          A collection of cones $C$ is called \emph{marked} if $\tau \prec \sigma$ and $\tau \in C$ implies $\sigma \in C$. Elements of $C$ are also called \emph{contracted cones}. For any complete divisorial fan one can associate a marked fansy divisor. For a $\dfS$ divisorial fan for each $P \in Y$, we have a slice $\dfS_P$. Then the corresponding marked fansy divisor is $\Xi= \sum \dfS_P.P$. 
        
 \end{definition}
         \begin{remark}
             If $\dfS$ is contraction free, then $C$ is empty and, conversely, for a given marked fansy divisor $\Xi$ with empty $C$ the corresponding $\dfS$ is contraction free \cite[proposition 1.6]{iltenpolarized}. 
         \end{remark}
         \begin{remark}
             A complexity one $T$-variety $X(\dfS)$ is complete if and only if each of it slices $\dfS_p$ is a complete subdivision of $\NQ$ and $Y$ is complete. 
         \end{remark} \label{remark2.10}
     
   In the following paragraph, we briefly recall the construction of a divisorial fan from a divisorial polyhedron (\cite[section 4 ]{iltenvollmert:upgrading}).

 \begin{definition}{}
Let $Y$ be a smooth curve, a \emph{divisorial polyhedron} consists of a pair $(\text{L}, \square)$, where $\square$ is a polyhedron in $\MQ$, and $\text{L} $ is piecewise affine concave map from $\square$ to $\text{CaDiv}_\Q Y$ taking values in samiample divisors.
     \end{definition}
     Let's recall how to associate divisorial polyhedron to a complexity one $T$-variety. For any $u \in \square$,  $\text{Lin}_P:\text{tail}(\square) \to \Q$ is defined by
 \[ \text{Lin}_{P}(v)   =  \lim_{\lambda \to \infty} L_{P}(u +\lambda . v )/\lambda \]
and we set
\begin{align*}
    \square^{*}_P &= \{ v \in \NQ \mid (v,w) \geq \text{Lin}_P(w)\ \forall w \in \square\};
    &
    \text{and }\text{L}^{*}_P& : \square^{*}_P \to Q \\
    &
    &
    \text{L}^{*}_P&(v) = \underset{ u \in \square }{\text{min} }  \biggl( \langle u,v \rangle - \text{L}_P(u) \biggl)
\end{align*}

Then, for any $P$, $\Xi(\text{L}^{*}_P)$ is subdivision of $ \square^{*}_P$ consisting of pointed polyhedra. Now from the above construction, we can associate a divisorial fan.
Define a set $K=\{ P \in Y | \; \text{L}_P \not\equiv 0 \}$ and $E=\underset{P \in K}{\sum}P$ \label{definitionE}. Then the divisorial fan $\dfS$ is generated by \[C_{\text{L}}=\{ \Delta_P\otimes P + \phi\otimes(E-P)  | \; P \in K, \; \Delta_P \in \Xi(\text{L}^{*}_P) \}.\]
\begin{remark}
    From \cite[proposition 4.2]{iltenvollmert:upgrading} $\dfS$ is a contraction free divisorial fan.
\end{remark}

 We are interested in the product of two $T$-varieties. The combinatorial data associated to a product is given by following lemma.

\begin{lemma}{\cite[Propositon 5]{aipsv:geomtvar}}
  \label{pro:prodtvar}
   For two pp-divisors $\plD^\prime=\sum \Delta_i^\prime\otimes D_i^\prime$ on $(Y^\prime, \sigma^\prime \subset \Nprime)$ and $\plD =\sum \Delta_i\otimes D_i$ on $(Y,\sigma \subset N)$, define $\plD \times \plD^\prime = \sum (\Delta_i \times \sigma^\prime)\otimes (D_i \times Y^\prime) + \sum (\sigma \times  \Delta_i^\prime)\otimes (Y \times D_i^\prime)  $ as a pp-divisor on $(Y\times Y^\prime , N \oplus \Nprime)$. Then $X(\plD \times \plD^\prime)=X(\plD) \times X(\plD^\prime)$.
  
\end{lemma}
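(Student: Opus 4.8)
The plan is to verify the identity $X(\plD \times \plD') = X(\plD) \times X(\plD')$ by comparing the sheaves of graded algebras used in Definition~\ref{def:afftvarpld}, exploiting the fact that the dual of a product cone splits as a direct sum. First I would set up notation: write $M' = \Hom(\Nprime,\Z)$ and $M = \Hom(N,\Z)$, so that the dual lattice of $N \oplus \Nprime$ is $M \oplus M'$, and the dual cone of $\sigma \times \sigma'$ inside $(N\oplus\Nprime)_\Q$ is exactly $\dual{\sigma} \times \dual{(\sigma')}$. Thus a lattice point $\bar u$ in $\dual{(\sigma\times\sigma')} \cap (M\oplus M')$ is uniquely a pair $(u,u')$ with $u \in \dual{\sigma}\cap M$ and $u' \in \dual{(\sigma')}\cap M'$. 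The next step is to compute the evaluation map of $\plD\times\plD'$ at such a point: since $(\plD\times\plD')(\bar u) = \sum_i\bigl(\min_{v\in\Delta_i\times\sigma'}\inpr{\bar u}{v}\bigr)(D_i\times Y') + \sum_j\bigl(\min_{w\in\sigma\times\Delta_j'}\inpr{\bar u}{w}\bigr)(Y\times D_j')$, and since $\inpr{(u,u')}{(v,v')} = \inpr{u}{v} + \inpr{u'}{v'}$ with the minimum over a product being the sum of the minima, the first sum contributes $\min_{v\in\Delta_i}\inpr{u}{v}$ on $D_i\times Y'$ provided $\min_{v'\in\sigma'}\inpr{u'}{v'} = 0$, which holds precisely because $u'\in\dual{(\sigma')}$; symmetrically for the second sum. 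Hence $(\plD\times\plD')(u,u') = \pr^*\plD(u) + \pr'^*\plD'(u')$ as a divisor on $Y\times Y'$, where $\pr,\pr'$ are the two projections.

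With this evaluation formula in hand, the heart of the argument is the identification of global sections. By the Künneth-type formula for sections of line bundles on a product (using that $Y,Y'$ are normal and semiprojective, so pushforwards behave well, and that $\strshf{Y\times Y'}(\pr^*D_1 + \pr'^*D_2) = \pr^*\strshf{Y}(D_1)\otimes\pr'^*\strshf{Y'}(D_2)$), one gets
\[
  \glsec{Y\times Y'}{\strshf{}\bigl((\plD\times\plD')(u,u')\bigr)} \;\cong\; \glsecO{Y}(\plD(u)) \otimes_{\C} \glsecO{Y'}(\plD'(u')).
\]
Summing over all $(u,u') \in (\dual\sigma\cap M)\times(\dual{(\sigma')}\cap M')$ and checking that the multiplication maps are compatible (the product of a section in $\shfA_u$ with one in $\shfA'_{u'}$ lands in degree $(u,u')$, matching the ring structure of the tensor product $\bigoplus_u \glsecO{Y}(\plD(u))\;\otimes_\C\;\bigoplus_{u'}\glsecO{Y'}(\plD'(u'))$), we obtain an isomorphism of graded $\C$-algebras
\[
  \bigoplus_{\bar u}\glsec{Y\times Y'}{\shfA_{\bar u}^{\plD\times\plD'}} \;\cong\; \glsec{Y}{\shfA^{\plD}} \otimes_\C \glsec{Y'}{\shfA^{\plD'}}.
\]
Taking $\Spec$ and using that $\Spec(R\otimes_\C R') = \Spec R \times_\C \Spec R'$ then yields $X(\plD\times\plD') = X(\plD)\times X(\plD')$. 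One should also remark that $\plD\times\plD'$ is genuinely a pp-divisor on $(Y\times Y', N\oplus\Nprime)$ with tail cone $\sigma\times\sigma'$ — the tail cone of $\Delta_i\times\sigma'$ is $\tail(\Delta_i)\times\sigma' \subseteq \sigma\times\sigma'$, semiampleness and bigness of $(\plD\times\plD')(u,u')$ follow from the corresponding properties of $\plD(u)$ and $\plD'(u')$ together with standard facts about pullbacks under the (flat, surjective) projections — so that the left-hand side is well-defined.

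The main obstacle I anticipate is the Künneth-type statement for global sections of the divisorial sheaves on the product, i.e.\ justifying $\glsecO{Y\times Y'}(\pr^*D_1+\pr'^*D_2) \cong \glsecO{Y}(D_1)\otimes_\C\glsecO{Y'}(D_2)$ in the needed generality. On normal semiprojective varieties over $\C$ with $D_1, D_2$ the relevant (Cartier, or at least reflexive) divisors this is standard — it follows from flat base change and the projection formula applied to $\pr_*\strshf{Y\times Y'}(\pr^*D_1+\pr'^*D_2) = \strshf{Y}(D_1)\otimes_{\strshf{Y}}\pr_*\pr'^*\strshf{Y'}(D_2)$ — but one must be slightly careful that the divisors $\plD(u)$ are only $\Q$-Cartier in general, so the argument should be phrased in terms of the reflexive sheaves $\strshf{Y}(\plD(u))$ and the compatibility of reflexive pullback with products; alternatively, since by Theorem~\ref{thm:affinetvar} we may already assume each individual $X(\plD(\cdot))$ is a well-behaved normal variety, one can invoke the equivalence of categories in that theorem to reduce to the universal property of the product in the category of affine $T$-varieties, matching combinatorial data on both sides. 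I would present the direct sheaf-theoretic computation as the primary argument and mention the categorical shortcut as a remark.
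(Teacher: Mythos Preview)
The paper does not supply its own proof of this lemma: it is quoted verbatim from \cite[Proposition~5]{aipsv:geomtvar} and left without argument. The only hint the paper gives is in the proof of Proposition~\ref{pro:XtEc1tvr}, where it remarks that the product description ``follows from K{\"u}nneth formula \cite[6.7.8]{grothendieck:ega3ii}.'' Your proposal is precisely a fleshed-out version of that hint: you split $\dual{(\sigma\times\sigma')}$ as $\dual{\sigma}\times\dual{(\sigma')}$, compute $(\plD\times\plD')(u,u') = \pr^*\plD(u)+\pr'^*\plD'(u')$, and then invoke the K{\"u}nneth isomorphism on global sections to identify the two graded algebras. This is correct and matches the approach the paper implicitly endorses; your caution about the $\Q$-Cartier issue is appropriate but not a genuine obstacle, since the definition of $X(\plD)$ already uses the sheaves $\strshf{Y}(\plD(u))$ and the K{\"u}nneth formula applies to these coherent sheaves on the semiprojective bases.
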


\subsection{Equivariant Chow groups}
\label{ssc:eqchowgp}

For schemes with an action of a group $G$, Totaro \cite{totaro:Chowringclsp} and Edidin and Graham \cite{eg:equivintth} define an equivariant Chow group, which we recall now.
For an arbitrary group $G$, we have the construction of a $\Delta$-complex $EG$ on which $G$ acts freely on the left multiplication. We denote $BG=EG/G$.The spaces $EG$ and $BG$ are usually infinite dimensional spaces. For $G=\C^*$, we have  $EG=\C^\infty \setminus \setl{0}$ on which $G$ acts freely and $BG=\mathbb{P}^\infty$. But these spaces are inexplicable in algebraic geometry. Nonetheless, we have algebraic varieties $E_m = \C^m \setminus \setl{0}$ and $B_m=\mathbb{P}^{m-1}$ which provide approximations to 
$\C^\infty \setminus \setl{0} \to \mathbb{P}^{m-1}$. For two groups $G$ and $H$, $E(G \times H) \equiv  EG \times EH$. For an algebraic torus of dimension $d$ we have $ET={(\A^\infty \setminus \setl{0})^{d}}$ and $E^N_T = \BGtorus{d}$. Thus we denote quotient variety $(X \times \BGtorus{d})/T$ by $X_T$.
\begin{definition}[$k^{th}$ equivariant Chow group]
    For $X$, an $n+1$ dimensional $T$-variety of complexity one, and for $k \leq d+1$, the $k$-th  equivariant Chow group is defined as the usual $k^{th}$ Chow group of space $X_T$  i.e $A_k(X_T)$.
\end{definition}

\section{Estimating equivariant Chow groups of a \tvar{}}
\label{sec:esteqchg}

\subsection{A description of $E^N_T$}
\label{sse:descentt}.

In our case, we are lucky that the approximate space for a torus $T$ of dimension $d$, given by $\BGtorus{d}$ is a toric variety. The next lemma will give a description of this variety in terms of a fan. The cones of this fan are constructed as follows.

Note that for a $d$-dimensional torus $T$, a finite dimensional
approximation of the classifying space is given by
\begin{equation*}
  E^N_T = \biggl( \A^N \setminus \setl{0} \biggr)^d.
\end{equation*}
$E^N_T$ is a toric variety of dimension $Nd$, whose dense torus will be
denoted by $T_E$. The fan describing $E_T^N$ as a toric variety can be
constructed as follows.

Consider the $\Q$-vector space $\Q^N$ with the standard basis $e_1, \dotsc,
e_N$. Let $\theta$ be the cone generated by $\setl{e_1, \dotsc, e_N}$.  Let
$\tupI$ be the set of tuples of numbers between $1$ and $N$:
\begin{equation}
  \label{equ:tupled1N}
  \tupI = \set{(i_1, \dotsc, i_d) \in \N^d}{1 \leq i_j \leq N, \text{ for
  each } 1 \leq j \leq d}.
\end{equation}
For each $i \in \setl{1, 2, \dotsc, N}$, define $\sigma^i$ to be the face
$\theta \cap \orth{\rho_i}$ where $\rho_i$ is the ray generated by $e_i$.
The cone $\sigma^i$ is generated by $e_1, \dotsc, \widehat{e_i}, \dotsc,
e_N$, all the basis vectors except $e_i$. For $I = \setl{i_1, \dotsc, i_r}
\subset \setl{1, \dotsc, N}$, let
\begin{equation}
  \label{equ:conessgE}
  \sigma_I = \sigma_{i_1, \dotsc, i_r}  = \delta_1 \times \dotsb \times
  \delta_d;
\end{equation}
where $\delta_i = \sigma^i$ if $i \in \setl{i_1, \dotsc, i_r}$; and $\delta_i
= 0$ otherwise. Let $\Sigma_E$ be the fan generated by these cones in
$\Q^{Nd}$. The following lemma is evident.

\begin{lemma}
  \label{lem:fanENTtv}
  With notation, as described above, the toric variety corresponding to
  $\Sigma_E$ is exactly $E_T^N = \bigl( \A^N \setminus \setl{0} \bigr)^d$.
\end{lemma}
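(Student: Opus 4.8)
The plan is to verify directly that the fan $\Sigma_E$ described in \eqref{equ:conessgE}, together with its lattice $\Z^{Nd}$, is the fan of the toric variety $\bigl(\A^N \setminus \setl{0}\bigr)^d$. I would first treat the single-factor case $d=1$: the affine space $\A^N$ has fan given by the full cone $\theta = \operatorname{cone}(e_1,\dots,e_N)$ together with all its faces, and removing the origin corresponds to removing exactly the torus-fixed point, i.e.\ deleting the maximal cone $\theta$ while keeping all proper faces. The faces of $\theta$ are precisely the cones $\operatorname{cone}(e_j : j \in J)$ for $J \subsetneq \setl{1,\dots,N}$; among these, the facets are the $\sigma^i = \theta \cap \orth{\rho_i} = \operatorname{cone}(e_1,\dots,\widehat{e_i},\dots,e_N)$, and every proper face is a face of some $\sigma^i$. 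Hence the fan of $\A^N \setminus \setl{0}$ is exactly the fan generated by $\setl{\sigma^1,\dots,\sigma^N}$, which matches the $d=1$ instance of the construction.

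Next I would invoke the standard fact that the fan of a product of toric varieties is the product of the fans: if $X_1, X_2$ have fans $\Sigma_1 \subset (N_1)_\Q$, $\Sigma_2 \subset (N_2)_\Q$, then $X_1 \times X_2$ has fan $\setl{\tau_1 \times \tau_2 : \tau_i \in \Sigma_i}$ in $(N_1 \oplus N_2)_\Q$. Applying this $d$ times to $E_T^N = (\A^N\setminus\setl{0})^d$ with each factor carrying the fan generated by $\setl{\sigma^1,\dots,\sigma^N}$ in $\Q^N$, I get that the fan of $E_T^N$ is generated by all products $\delta_1 \times \dots \times \delta_d$ where each $\delta_j$ ranges over the fan of $\A^N \setminus \setl{0}$. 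Since every cone in that single-factor fan is a face of some $\sigma^{i_j}$, the maximal cones of the product fan are exactly the $\sigma_{i_1,\dots,i_d} = \sigma^{i_1} \times \dots \times \sigma^{i_d}$ with $(i_1,\dots,i_d) \in \tupI$, and the remaining cones $\sigma_I$ with $\abs{I} < d$ together with their faces fill out the rest; this is precisely $\Sigma_E$. Finally I would note the lattice bookkeeping: the dense torus $T_E$ has character lattice $\Z^{Nd}$, consistent with $\dim E_T^N = Nd$, and the $T$-action referenced later comes from a quotient map that I would identify explicitly if needed.

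Since the lemma is asserted to be ``evident,'' the write-up should be short: I expect the only mild subtlety is making precise that ``deleting the origin from $\A^N$'' is the toric operation of removing the unique torus-fixed point, hence removing only the top-dimensional cone from the fan while retaining all proper faces — this is where one uses the orbit–cone correspondence. Everything else is a direct translation of the combinatorial definition \eqref{equ:conessgE} through the product-of-fans formula, with no real obstacle.
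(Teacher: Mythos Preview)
Your proposal is correct and is exactly the standard verification that the paper declines to spell out: the paper simply declares the lemma ``evident'' and offers no proof. Your two-step argument (orbit--cone correspondence for $\A^N\setminus\setl{0}$, then the product-of-fans formula applied $d$ times) is the natural way to make this precise, and there is nothing to add.

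One small remark on notation rather than substance: the paper's definition of $\sigma_I$ just before the lemma is slightly garbled (it mixes the tuple indexing from $\tupI$ with set-style indexing $I\subset\setl{1,\dotsc,N}$), but you have read through this correctly---the maximal cones of $\Sigma_E$ are the products $\sigma^{i_1}\times\dotsb\times\sigma^{i_d}$ for $(i_1,\dotsc,i_d)\in\tupI$, and every other cone in $\Sigma_E$ is a face of one of these. That is all the lemma needs.
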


\subsection{Product of a $T$-variety and $E_T^N$}

Suppose $Y$ is a curve and $\dfS$ be a divisorial fan on $Y$. Suppose $X =
X(\dfS)$ be the corresponding affine \tvar{} (of complexity $1$). We wish to
describe $X(\dfS) \times E_T^N$ as a \tvar{}. Observe that $E_T^N$ is a
toric variety under the action of the torus $T_E$. Being an approximation of
the classifying space, $E_T^N$ comes with a natural action of $T$. Thus we
get a diagonal action of $T$ on $X(\dfS) \times E_T^N$. We wish to compute
the geometric quotient
\begin{equation*}
  (X(\dfS) \times E_T^N) / T
\end{equation*}
Our idea is that the geometric quotient will be birational to the space of any
pp-divisor (see definition \ref{def:ppdivsor}) representing $X(\dfS) \times
E_T^N$ is considered as a \tvar{} under the action of $T$. We begin by describing
$X(\dfS) \times E_T^N$ as a complexity $1$ \tvar{} under the action of $T
\times T_E$.

\begin{proposition}
  \label{pro:XtEc1tvr}
  Suppose $Y$ is a curve, $T$ a torus and $E_T^N$ the corresponding
  approximate classifying space.
  \begin{enumerate}
    \item \label{ite:affdscpd}
      (Affine case) Suppose $\plD = \sum_{i=1}^{n} \Delta_i \otimes
      \setl{p_i}$ is a pp-divisor on $Y$ and $X(\plD)$ is the corresponding
      affine \tvar{}. For $I \in \tupI$ (see equation \eqref{equ:tupled1N})
      define
      \begin{equation*}
	\plD_I = \sum_{i = 1}^{n} \bigl(\Delta_i \times \sigma_I\bigr)
	\otimes \setl{p_i}
      \end{equation*}
      where $\sigma_I$ is the cone defined in equation \eqref{equ:conessgE}.
      Let $\dfS_{\plD}$ be the divsorial fan generated by $\set{\plD_I}{I
      \in \tupI}$. Then, $X(\plD) \times E_T^N$, considered as a complexity
      $1$ \tvar{} under the action of $T \times T_E$, is described by
      $\dfS_{\plD}$.
    \item \label{ite:gendscpd}
      (General case) For a \tvar{} $X = X(\dfS)$, described by a divisorial
      fan $\dfS$ over a curve $Y$, $X \times E_T^N$ is desribed by the
      divisorial fan generated by $\set{\dfS_{\plD}}{\plD \in \dfS}$ where
      for each $\plD \in \dfS$, $\dfS_{\plD}$ is defined as in the affine
      case (item \ref{ite:affdscpd}).
  \end{enumerate}
\end{proposition}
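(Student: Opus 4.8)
The plan is to reduce everything to Lemma~\ref{pro:prodtvar} (the product formula for pp-divisors) together with Lemma~\ref{lem:fanENTtv} (the fan description of $E_T^N$), and then bootstrap from the affine case to the general case by gluing. First I would treat item~\ref{ite:affdscpd}. The variety $E_T^N$ is a toric variety, hence it is the $T_E$-variety associated to the trivial divisorial fan over a point $Y_0 = \Spec \C$; concretely, for each maximal cone $\sigma_I$ of the fan $\Sigma_E$ of Lemma~\ref{lem:fanENTtv} the affine chart is $X(\sigma_I \otimes \{\,\star\,\})$, a pp-divisor over a point. Now I would apply Lemma~\ref{pro:prodtvar} with $Y' = Y_0$ a point and $\plD' = \sigma_I \otimes \{\star\}$: since $D_i \times Y_0 = D_i$ and $Y \times Y_0 = Y$, the product pp-divisor on $(Y \times Y_0, N \oplus \Q^{Nd}) = (Y, N \oplus \Q^{Nd})$ is exactly
\[
  \plD \times (\sigma_I \otimes \{\star\}) = \sum_{i=1}^n (\Delta_i \times \sigma_I) \otimes \{p_i\} + (\sigma \times \sigma_I)\otimes (Y\times\{\star\}),
\]
and the last term is supported on the empty divisor, so it is $\plD_I$ as defined in the statement. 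Hence $X(\plD_I) = X(\plD)\times X(\sigma_I\otimes\{\star\})$, i.e.\ $X(\plD_I)$ is the product of $X(\plD)$ with the affine toric chart of $E_T^N$ indexed by $I$.

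Next I would check that as $I$ ranges over $\tupI$ these affine pieces glue along the expected open subsets to give $X(\plD)\times E_T^N$, and that the gluing data on the pp-divisor side is precisely that of a divisorial fan. For this one observes that intersecting $\plD_I \cap \plD_{I'}$ coefficient-wise gives $\sum_i (\Delta_i \times (\sigma_I\cap\sigma_{I'}))\otimes\{p_i\}$, and $\sigma_I \cap \sigma_{I'}$ is again a cone of $\Sigma_E$ because $\Sigma_E$ is a fan; so the collection $\set{\plD_I}{I\in\tupI}$ is closed under intersection, and each $\plD_I\cap\plD_{I'}$ is a face of both $\plD_I$ and $\plD_{I'}$ (the face relation on the $\plD_I$'s is controlled entirely by the face relation in $\Sigma_E$, since the curve part $\sum_i\Delta_i\otimes\{p_i\}$ is common to all of them). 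Thus $\dfS_\plD$ is a genuine divisorial fan, and the gluing of the toric charts $X(\sigma_I\otimes\{\star\})$ into $E_T^N$ is mirrored exactly by the gluing of the $X(\plD_I)$, so $X(\dfS_\plD) = X(\plD)\times E_T^N$ as $T\times T_E$-varieties. One should also record that the torus acting is $T\times T_E$ with character lattice $M\oplus\Z^{Nd}$, matching $N\oplus\Q^{Nd}$, and that the $T\times T_E$-action induced by this combinatorial description agrees with the diagonal-plus-toric action on the product; this is functoriality of the Altmann--Hausen construction (Theorem~\ref{thm:affinetvar}) under products.

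For item~\ref{ite:gendscpd}, I would argue that $X(\dfS) = \bigcup_{\plD\in\dfS} X(\plD)$ is an open cover by affine $T$-varieties glued along faces, so $X(\dfS)\times E_T^N = \bigcup_{\plD\in\dfS} \bigl(X(\plD)\times E_T^N\bigr)$ is an open cover glued along the same face relations crossed with $E_T^N$; by item~\ref{ite:affdscpd} each piece is $X(\dfS_\plD)$, and compatibility of the gluings on curve-side (faces within $\dfS$) and toric-side (faces within $\Sigma_E$) shows that $\set{\plD_I}{\plD\in\dfS,\ I\in\tupI}$ is closed under intersection and face relations, hence generates a divisorial fan whose associated $T\times T_E$-variety is $X(\dfS)\times E_T^N$. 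The main obstacle I anticipate is purely bookkeeping: verifying that the face relation $\prec$ among the $\plD_I$ is exactly the product of the face relations in $\dfS$ and in $\Sigma_E$ — in particular that no unexpected open embeddings $X(\plD_I)\hookrightarrow X(\plD_{I'})$ appear and that the gluing is separated — and checking that ``generated by'' in the statement (closing up under intersections) does not introduce polyhedral divisors outside the family $\set{\plD_I}{I}$. Both follow from the product structure of the cones in $\Sigma_E$ and the fact that the curve part is untouched, but making this precise is the technical heart of the argument.
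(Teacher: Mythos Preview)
Your proposal is correct and follows essentially the same approach as the paper: the paper's own proof is a single sentence invoking the product description of \tvars{} (Lemma~\ref{pro:prodtvar}, i.e.\ \cite[section 2.4]{aipsv:geomtvar}) together with the K{\"u}nneth formula, and your argument is exactly a fleshed-out version of that, spelling out the application of the product formula to each toric chart of $E_T^N$ and the gluing. The extra bookkeeping you anticipate about face relations and closure under intersection is not addressed in the paper's proof either, so you are already giving strictly more detail than the original.
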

\begin{proof}
  This is an easy consequence of the description of product of \tvars{} as a
  \tvar{} (see, for example, \cite[section 2.4]{aipsv:geomtvar}) which in turn follows from K{\"u}nneth formula
  \cite[6.7.8]{grothendieck:ega3ii}.
\end{proof}

\subsection{Computing the downgrade}

Let $Y$ be a smooth curve, and $\dfS$ be a divisorial fan. Let $X = X(\dfS)$ be a $T$-variety of codimension $1$. Note that $T \times T_{E}$ acts effectively on $X(\dfS) \times E^N_T$. By proposition \ref{pro:prodtvar}, one can write down the following description of this $T$-variety. To compute the torus-equivariant Chow group, we study $\bigl(X(\dfS) \times E^N_T\bigr) \gitquot T$. We know that the base of the pp-divisor in the description of $X(\dfS) \times E^N_T$ as a $T$-variety under the action of $T$ is birational to this quotient i.e. $ (X(\dfS) \times E^N_T\bigr) \gitquot T \;\text{birational to} \; \widetilde{Y_C}$.

\[
  \begin{tikzcd}
    \widetilde{X \times E^N_T} \arrow[r, "r_{E}"] \arrow[d, "\pi_C"] &
    X \times E^N_T  \\
    \widetilde{Y_C}  \arrow[d, "\pi"] \\
    Y,
  \end{tikzcd}
  \]

We compute this using downgrading following \cite{iltenvollmert:upgrading}. For $X = X(\dfS)$ a $T$-variety with a $d$-dimensional torus acting on it as above, let $M$ be the lattice of characters  for $T$ and $N$ be the dual of $M$. Let $M_{E}$ and its dual $N_{E}$ correspond to the torus $T_{E}$ acting on $E^N_T$. We mention a few maps to make the description of the downgraded $T$-variety easier. Consider the short exact sequence
\[
  \begin{tikzcd}
    0 \arrow[r] &
    M_{E} \arrow[r, "\iota"] &
    M \oplus M_{E} \arrow[r, "\pi"] \arrow[l, "t", bend left, dashed] &
    M \arrow[r] &
    0
  \end{tikzcd}
\]
where $\iota$ and $\pi$ are described in terms of the map $I \colon M_{E} \longrightarrow M$ which is given by the matrix (with respect to the standard basis)
\[
   I = 
   \begin{pmatrix}
     I_1 & \cdots & I_d
   \end{pmatrix}
\]
where each $I_i$ for $i = 1, \dotsc, d$ is a $d \times N$ matrix given by
\[
  I_i = 
  \begin{pmatrix}
     e_i & \cdots & e_i
  \end{pmatrix},
  \qquad i \in \setl{1, \dotsc, d}
\]
where $e_i$ is the $i$-th vector in the standard basis of $\Z^d$ with $1$ at the $i$-th position and $0$ elsewhere. Now,
\[
  \iota(b) = (-I(b), b) \qquad \text{and} \qquad \pi(a, b) = a + I(b).
\]

Dualizing this, we also get a short exact sequence
\begin{equation} \label{equ:defalprh}
\begin{tikzcd}
  0 \arrow[r] &
  N \arrow[r, "\alpha"] &
  N \oplus N_{E} \arrow[r, "\rho"] &
  N_{E} \ar[r] &
  0
\end{tikzcd}
\end{equation}
where the maps are described in terms of $J \colon N \longrightarrow N_{E}$ which is defined as
\[
 J =
 \begin{pmatrix}
   J_1 \\ \vdots \\ J_d
 \end{pmatrix}.
\]
Here each $J_i$, $i \in \setl{1, \dotsc, d}$ is an $N \times d$ matrix of the form
\[
  J_i^T = 
  \begin{pmatrix}
    e_i & \cdots & e_i
  \end{pmatrix}
\]
where $e_i$ as above is the $i$-th element of the standard basis for $\Z^d$. With this notation, we have
\[
  \alpha(a) = (a, J(a)) \qquad \text{and} \qquad \rho(a, b) = b - J(a).
\]

As before, let $Y$ be a curve, $T$ a torus, $E_T^N$ the corresponding
approximate space and $X = X(\dfS)$ be a complexity $1$ \tvar{}, where the
divisorial fan $\dfS$ is defined over the curve $Y$. Let the divisorial fan
describing $\tilde{X} := X \times E_T^N$, computed in proposition
\ref{pro:XtEc1tvr}, be denoted by $\dfS_{\tilde{X}}$. In this section, we aim to describe $\tilde{X} = X(\dfS_{\tilde{X}})$ as a \tvar{} under the action
of $T$ in terms of a semiprojective variety $\widetilde{Y_C}$ and a divisorial fan
$\dfS_{\widetilde{Y_C}}$ defined over $\widetilde{Y_C}$. The construction follows
\cite[section 5.1]{iltenvollmert:upgrading} closely. Recall that $T_E$ was the dense open torus in $E_N^T$ (see
\ref{sse:descentt}). Denote $T \times T_E$ be $\tilde{T}$. The inclusion $T
\hookrightarrow \tilde{T} = T \times T_E$ induces a surjective homomorphism
$\pi \colon \tilde{M} \longrightarrow M$ of the corresponding lattices of
characters. Let $M_E \equiv \ker \pi$ and let us define the choices of sections
and cosections of the short exact sequences of lattices using the following
diagram.
\begin{equation*}
  \begin{tikzcd}
    0 \ar[r] &
    M_E \ar[r, "\iota"'] & 
    \tilde{M} \ar[r, "\pi"'] \ar[l, bend right, "\tau"'] &
    M \ar[r] \ar[l, bend right, "\sigma^*"']& 
    0 \\
    0 &
    N_E \ar[l] \ar[r, bend left, "\tau^*"] &
    \tilde{N} \ar[l, "\rho"] \ar[r, bend left, "\sigma"] &
    N \ar[l] &
    0 \ar[l]
  \end{tikzcd}
\end{equation*}
Consider a pp-divisor $\plD= \sum \Delta_p \otimes p \in \dfS$.  Let $\omega_{\plD} \times \sigma_I^\vee \subset
\tilde{M}_{\Q}$ be the weight cone of the invariant open subset $X(\plD) \times U_{\sigma_I}$ of
$\tilde{X}$ described as a \tvar[\tilde{T}] and observe that  $
\pi(\omega_{\plD} \times \sigma_I^\vee )= M$. For $u=0 \in M$, define (compare \cite[section
5.1]{iltenvollmert:upgrading}).
\begin{equation*}
  \square_u = \tau\bigl( \pi^{-1}(u) \cap \tilde{\omega} \bigr)
  \qquad \text{and} \qquad
  \Psi_u \colon \square_u \longrightarrow \cadivQ(Y)
\end{equation*}
by $\Psi_u(u') = \plD(u' + \sigma^*(u))$. Each $\Psi_u$ is a divisorial
polyhedron. For each domain of linearity $\tilde{\omega}_i={\omega_{\plD}}_i \times \sigma_I \subset
\omega_{\plD} \times \sigma_I $ of $\plD \times \sigma_I$, choose  $u_i=0 \in \relint
\pi(\tilde{\omega}_i)=M$ in the relative interior of the image. Suppose
$\dfS_{\plD \times \sigma_I}$ be the divisorial fan corresponding to $\sum_{}^{}
\Psi_{u_i}$. Let $\tilde{Y}_{\plD \times \sigma_I}$ be the \tvar{} $X(\dfS_{\plD \times \sigma_I})$, See \cite[theroem 5.2]{iltenvollmert:upgrading}. we take $\Xi_p$ to be the coarsest polyhedral subdivision of $\rho(\Delta_p \times \sigma_I)$ containing subdivision of $\pi(\Delta)$ for any face $\Delta \prec \Delta_p \times \sigma_I $.

\begin{proposition}
  (\cite[Propositon 5.1]{iltenvollmert:upgrading}) $ \Xi_p ={\dfS_{\plD \times \sigma_I}}_p $
\end{proposition}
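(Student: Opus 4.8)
This is \cite[Proposition~5.1]{iltenvollmert:upgrading} applied to the pp-divisor $\plD \times \sigma_I$, which describes the invariant affine chart $X(\plD) \times U_{\sigma_I}$ of $\tilde X$ as a \tvar[\tilde{T}]; the plan is to unwind that downgrade and to check that the polyhedral subdivision it installs in the slice over $p$ is exactly $\Xi_p$. By \cite[Theorem~5.2]{iltenvollmert:upgrading}, the divisorial fan $\dfS_{\plD \times \sigma_I}$ on $\tilde Y_{\plD \times \sigma_I}$ is generated by the divisorial fans attached to the divisorial polyhedra $\Psi_{u_i} \colon \square_{u_i} \to \cadivQ(Y)$, $u_i = 0$, one for each domain of linearity $\tilde\omega_i = \omega_{\plD, i} \times \sigma_I^{\vee}$ of the evaluation map of $\plD \times \sigma_I$. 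So I would first describe the slice at $p$ of the divisorial fan of a single $\Psi_{u_i}$: by the passage from a divisorial polyhedron to its divisorial fan recalled above (\cite[\S4]{iltenvollmert:upgrading}), that slice is the subdivision $\Xi\bigl((\Psi_{u_i})^{*}_{p}\bigr)$ of $(\square_{u_i})^{*}_{p}$ into pointed polyhedra, so ${\dfS_{\plD \times \sigma_I}}_p$ is the polyhedral subdivision generated by the cells $(\square_{u_i})^{*}_{p}$ as $i$ ranges over all domains of linearity.

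Next I would make this explicit with the maps $\iota, \pi, \tau, \sigma^{*}$ and $\alpha, \rho, \tau^{*}$. Since $\sigma_I$ is a cone, the evaluation map of $\plD \times \sigma_I$ is $(u', u'') \mapsto \plD(u')$ on $(\sigma_{\plD} \times \sigma_I)^{\vee} = \sigma_{\plD}^{\vee} \times \sigma_I^{\vee}$, so its maximal domains of linearity are the $\tilde\omega_i = \omega_{\plD, i} \times \sigma_I^{\vee}$ with $\omega_{\plD, i}$ a maximal cone of linearity of $\plD$, equivalently the normal cone of a vertex $v_i$ of the slice $\Delta_p \times \sigma_I$. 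As $u_i = 0$ forces $\sigma^{*}(u_i) = 0$ and $\square_{u_i} = \tau\bigl(\pi^{-1}(0) \cap \tilde\omega_i\bigr)$ lies inside the domain of linearity $\tilde\omega_i$, the map $\Psi_{u_i} = \restrict{(\plD \times \sigma_I)}{\square_{u_i}}$ is \emph{linear}; unpacking $\iota(b) = (-I(b), b)$ and $\rho(a, b) = b - J(a)$ then shows that $(\Psi_{u_i})^{*}_{p} \equiv 0$ and that $(\square_{u_i})^{*}_{p}$ is a pointed cone containing $\sigma_I$, with apex the vertex $\rho(v_i)$ of $\rho(\Delta_p \times \sigma_I)$ corresponding to $\tilde\omega_i$. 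Hence $\Xi\bigl((\Psi_{u_i})^{*}_{p}\bigr)$ has the single maximal cell $(\square_{u_i})^{*}_{p}$, and ${\dfS_{\plD \times \sigma_I}}_p$ is the coarsest common refinement of these cones over all $i$. It remains to observe that this refinement is precisely the coarsest polyhedral subdivision of $\rho(\Delta_p \times \sigma_I)$ in which every $\rho(\Delta)$, for $\Delta \prec \Delta_p \times \sigma_I$, is a union of cells, i.e.\ $\Xi_p$; this yields $\Xi_p = {\dfS_{\plD \times \sigma_I}}_p$.

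The step I expect to be the main obstacle is the convex-geometry bookkeeping in the middle paragraph: one must check that restricting the evaluation map to the ``vertical'' cone $\square_{u_i} = M_{E, \Q} \cap \tilde\omega_i$ and then applying $(\,\cdot\,)^{*}_{p}$ is dual to the linear projection $\rho$ (equivalently, to the quotient $\tilde N_{\Q} \to \tilde N_{\Q}/\alpha(\NQ)$) restricted to $\Delta_p \times \sigma_I$, and --- most delicately --- that distinct domains of linearity $\tilde\omega_i$, $\tilde\omega_j$ produce $\rho$-images overlapping exactly along the $\rho$-image of their common face, so that the pieces $\Xi\bigl((\Psi_{u_i})^{*}_{p}\bigr)$ glue to the \emph{coarsest} such subdivision rather than to a strict refinement of it. Once the divisorial-polyhedron/divisorial-fan dictionary of \cite[\S4--5]{iltenvollmert:upgrading} and the explicit forms of $\iota, \pi, \tau, \sigma^{*}$ and $\alpha, \rho, \tau^{*}$ recorded above are in hand, the remaining verifications are formal.
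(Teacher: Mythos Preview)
The paper does not prove this proposition at all: it is stated with the citation \cite[Proposition~5.1]{iltenvollmert:upgrading} and immediately followed by ``Before we proceed, let us recall some results from \cite{zlop95}''. So there is nothing to compare against; your sketch already goes well beyond what the paper offers, and its overall architecture---reducing to the divisorial polyhedra $\Psi_{u_i}$, computing their $p$-slices via the $(\cdot)^{*}_{p}$ construction, and assembling the common refinement---is the right unpacking of the cited result.

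One imprecision worth flagging: you identify the maximal domains of linearity $\tilde\omega_i = \omega_{\plD,i}\times\sigma_I^{\vee}$ with ``the normal cone of a vertex $v_i$ of the slice $\Delta_p\times\sigma_I$''. The domains of linearity of $\plD$ are cut out by the \emph{common} refinement of the normal fans of all slices $\Delta_q$, $q\in Y$, so for a fixed $p$ several distinct $\omega_{\plD,i}$ may land in the normal cone of the same vertex of $\Delta_p$. This does not invalidate your conclusion---the $(\square_{u_i})^{*}_{p}$ for such $i$ will tile the translated dual cone at that vertex---but it is precisely the reason your ``main obstacle'' (that the pieces glue to the coarsest subdivision rather than a strict refinement) is genuine. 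Also, $(\Psi_{u_i})^{*}_{p}$ is constant on its domain (since $\square_{u_i}$ is a cone), but not literally $\equiv 0$; this is harmless for the subdivision $\Xi\bigl((\Psi_{u_i})^{*}_{p}\bigr)$, which is what you actually use.
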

Before we proceed, let us recall some results from \cite{zlop95}. Given a polyhedron $\Delta$, we denote the collection of faces of a polyhedron by $L(\Delta)$.
\begin{lemma}\label{productoffaces}
    For polyhedron $\Delta$ and cone $\sigma$, $L(\Delta \times \sigma)=L(\Delta) \times L(\sigma)$, where $L(\Delta) \times L(\sigma)=\set{\Delta^\prime \times \sigma^\prime}{\Delta^\prime \prec \Delta \; \text{and} \; \sigma^\prime \prec \sigma}.$ 
\end{lemma}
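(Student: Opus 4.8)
The plan is to reduce everything to the standard description of faces by supporting linear functionals and to exploit the fact that, on a Cartesian product, such a functional splits into a pair. Fix ambient $\Q$-vector spaces with $\Delta$ sitting in one, say $V$, and $\sigma$ in another, say $W$, so that $\Delta \times \sigma \subset V \times W$. For a linear functional $\ell$ on $V$ that is bounded below on $\Delta$ I write $\Delta_\ell := \set{v \in \Delta}{\inpr{\ell}{v} = \min_{v' \in \Delta}\inpr{\ell}{v'}}$ for the corresponding face; recall that the faces of $\Delta$ are exactly the $\Delta_\ell$ (with $\ell = 0$ recovering $\Delta$ itself), together with $\emptyset$ by convention, and similarly for $\sigma$ and for $\Delta \times \sigma$. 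The one extra observation needed about the cone factor is that a functional $m$ on $W$ is bounded below on $\sigma$ if and only if $m \in \dual{\sigma}$, in which case $\min_{w \in \sigma}\inpr{m}{w} = 0$ and the face of $\sigma$ it cuts out is $\sigma \cap \orth{m}$.

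First I would prove $L(\Delta) \times L(\sigma) \subseteq L(\Delta \times \sigma)$. Given a face $\Delta' = \Delta_\ell$ of $\Delta$ and a face $\sigma' = \sigma \cap \orth{m}$ of $\sigma$ (so $m \in \dual{\sigma}$), consider the functional $(\ell, m)$ on $V \times W$. It is bounded below on $\Delta \times \sigma$, its minimum there equals $\min_\Delta \ell + \min_\sigma m = \min_\Delta \ell$, and since $\inpr{\ell}{v} \ge \min_\Delta \ell$ for all $v \in \Delta$ and $\inpr{m}{w} \ge 0$ for all $w \in \sigma$, the locus where this minimum is attained is precisely $\set{(v,w)}{\inpr{\ell}{v} = \min_\Delta\ell,\ \inpr{m}{w}=0} = \Delta_\ell \times (\sigma\cap\orth{m}) = \Delta' \times \sigma'$. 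Hence $\Delta'\times\sigma' \prec \Delta\times\sigma$. The choice $(\ell,m)=(0,0)$ recovers the improper face $\Delta\times\sigma$, and the empty products are handled by the convention $\emptyset = \emptyset\times\sigma' = \Delta'\times\emptyset$.

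Then I would prove the reverse inclusion. Let $F \prec \Delta\times\sigma$ be cut out by a functional $(\ell,m)$ on $V\times W$ that is bounded below on $\Delta\times\sigma$. Fixing any $w_0 \in \sigma$ and letting $v$ range over $\Delta$ shows $\ell$ is bounded below on $\Delta$; symmetrically, fixing a point of $\Delta$ shows $m$ is bounded below on $\sigma$, hence $m \in \dual{\sigma}$ with $\min_\sigma m = 0$. Consequently the minimum of $(\ell,m)$ over $\Delta\times\sigma$ is $\min_\Delta\ell$, and the same computation as above identifies $F$ with $\Delta_\ell \times (\sigma\cap\orth{m}) \in L(\Delta)\times L(\sigma)$. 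Combining the two inclusions yields the claim.

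There is no serious obstacle here: the proof is essentially the additivity of the minimum of a linear functional over a Cartesian product. The only points needing a little care are the decoupling of the ``bounded below'' hypothesis into a separate condition on each factor, and the remark that a functional bounded below on a cone is automatically nonnegative there with vanishing minimum --- this is exactly what prevents the $\sigma$-coordinate of a face of $\Delta\times\sigma$ from being an arbitrary affine slice of $\sigma$ and forces it to be a genuine face. One should also keep the convention on the empty face consistent on both sides; since in this paper every polyhedron with a fixed tail cone is nonempty, the statement holds regardless of that choice.
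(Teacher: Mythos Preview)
Your proof is correct and considerably more detailed than the paper's. The paper takes a different route: given a face $F \prec \Delta \times \sigma$, it considers the two coordinate projections $P$ and $P'$ and asserts that $F = P(F) \times P'(F)$ with $P(F) \prec \Delta$ and $P'(F) \prec \sigma$, declaring this ``evident''. That argument rests on the intrinsic definition of a face (a convex subset which, whenever it contains the interior of a segment from the ambient polyhedron, contains the whole segment): from $(v_1,w_1),(v_2,w_2)\in F$ one rewrites their midpoint as $\tfrac12(v_1,w_2)+\tfrac12(v_2,w_1)$ to force $(v_1,w_2),(v_2,w_1)\in F$, hence $F$ is a product; a similar trick shows each projection is a face. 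Your approach instead uses the description of faces via supporting linear functionals and the additivity of minima over a Cartesian product. Both are standard; the projection argument is slicker and definition-level, while your functional argument is more explicit and makes transparent why the cone factor behaves well (via $m\in\dual{\sigma}$ forcing $\min_\sigma m=0$), and it gives both inclusions, whereas the paper only writes out one.
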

\begin{proof}
    Let $F$ be a face of $\Delta \times \sigma \subset \NQ\oplus \NQ^\prime$. Consider the canonical projection maps $P : \NQ\oplus \NQ^\prime \to \NQ $ and $P^\prime: \NQ\oplus \NQ^\prime \to \NQ^\prime$. Then $F=P(F) \times P^\prime(F)$ and $P(F) \prec \Delta$, $P^\prime(F) \prec \sigma$ is evident.
\end{proof}

\begin{lemma}\label{facemapface}
    For $\Delta_p \in \dfS_p$ and $\sigma_I \in \Sigma_E$, $L(\rho(\Delta_p \times \sigma_I))= \set{\rho(\Delta)}{\Delta \prec \Delta_p \times \sigma_I) }$
\end{lemma}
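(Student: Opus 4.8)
The plan is to show that $\rho$ restricts to an affine isomorphism of the polyhedron $\Delta_p\times\sigma_I$ onto its image $\rho(\Delta_p\times\sigma_I)$; since ``being a face'' is an affine-invariant notion, such an isomorphism transports the whole face lattice, and Lemma \ref{productoffaces} then identifies the faces on the source side. Recall from the short exact sequence \eqref{equ:defalprh} that, over $\Q$, $\ker\rho=\alpha(\NQ)=\set{(a,J(a))}{a\in\NQ}$. As $\sigma_I$ is a cone, the direction space of the affine hull of $\Delta_p\times\sigma_I$ is contained in $\NQ\oplus\operatorname{span}(\sigma_I)$, so it suffices to check that $\rho$ is injective on $\NQ\oplus\operatorname{span}(\sigma_I)$, equivalently that $\alpha(\NQ)\cap\bigl(\NQ\oplus\operatorname{span}(\sigma_I)\bigr)=0$; since $\alpha(a)=(a,J(a))$ and $\alpha$ is injective, this amounts to showing that $J(a)\in\operatorname{span}(\sigma_I)$ forces $a=0$.

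To see this, write $\sigma_I=\delta_1\times\dotsb\times\delta_d$ as in \eqref{equ:conessgE}, so that $\operatorname{span}(\sigma_I)=\operatorname{span}(\delta_1)\times\dotsb\times\operatorname{span}(\delta_d)$, and observe that each $\operatorname{span}(\delta_j)$ is contained in a coordinate hyperplane of $\Q^N$: this is trivial when $\delta_j=\setl{0}$, and when $\delta_j=\sigma^{i_j}$ one has $\operatorname{span}(\delta_j)=\orth{\rho_{i_j}}=\set{x\in\Q^N}{x_{i_j}=0}$. Now $J(a)=(a_1\mathbf 1,\dotsc,a_d\mathbf 1)$ with $\mathbf 1=(1,\dotsc,1)\in\Q^N$, so if $J(a)\in\operatorname{span}(\sigma_I)$ then for each $j$ the vector $a_j\mathbf 1$ lies in $\operatorname{span}(\delta_j)$ and hence has a vanishing coordinate; but all coordinates of $a_j\mathbf 1$ equal $a_j$, so $a_j=0$. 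Therefore $a=0$, as needed.

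It follows that $\rho$ is injective on the affine hull of $\Delta_p\times\sigma_I$; being affine, $\rho$ maps this affine hull onto the affine hull of $\rho(\Delta_p\times\sigma_I)$, and hence restricts to an affine isomorphism of $\Delta_p\times\sigma_I$ onto $\rho(\Delta_p\times\sigma_I)$. This isomorphism carries the faces of $\Delta_p\times\sigma_I$ bijectively, via $\Delta\mapsto\rho(\Delta)$, onto the faces of $\rho(\Delta_p\times\sigma_I)$, which is precisely the asserted equality $L\bigl(\rho(\Delta_p\times\sigma_I)\bigr)=\set{\rho(\Delta)}{\Delta\prec\Delta_p\times\sigma_I}$; by Lemma \ref{productoffaces} these faces are the polyhedra $\rho(\Delta'\times\sigma')$ with $\Delta'\prec\Delta_p$ and $\sigma'\prec\sigma_I$. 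I expect the only substantive point to be the short computation in the second paragraph: it uses the concrete shapes of $J$ (a block-wise diagonal embedding) and of $\sigma_I$ (a product of coordinate faces of $\theta$) in an essential way, since for an arbitrary cone and an arbitrary linear map neither that computation nor the statement of the lemma need hold. The remaining ingredient, namely the invariance of faces under affine isomorphism (equivalently, their independence of the ambient space), is standard.
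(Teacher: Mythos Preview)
Your proof is correct and follows the same route as the paper's: both argue that $\rho$ restricts injectively to (the affine hull of) $\Delta_p\times\sigma_I$ and then conclude that faces correspond bijectively under $\rho$. The paper merely asserts this injectivity and cites \cite[Lemma~7.10]{zlop95} for the face correspondence, whereas you supply the explicit verification (the computation showing $J(a)\in\operatorname{span}(\sigma_I)\Rightarrow a=0$) and phrase the conclusion via affine invariance of the face lattice.
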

\begin{proof}
   The  map $\restrict{\rho}{N \times \sigma_I} \colon N \times \sigma_I \longrightarrow N_{E}$ is injective. The projection of polyhedra $\restrict{\rho}{\Delta_p \times \sigma_I} \colon \Delta_p \times \sigma_I \longrightarrow \rho(\Delta_p \times \sigma_I) $ is bijective and $\rho$ is linear.  From \cite[Lemma 7.10]{zlop95}, inverse image of a face is a face. 
\end{proof}
\begin{lemma}\label{slicecomplete}
    If $\dfS_p$ is complete polyhedral complex then \[\set{\rho(\Delta_p \times \sigma_I)}{\Delta_p  \in \dfS_p \; \text{and} \; \sigma_I \in \Sigma_E }\] is complete polyhedral complex.
\end{lemma}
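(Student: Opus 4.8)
The plan is to prove that the collection
\[
  \set{\rho(\Delta_p \times \sigma_I)}{\Delta_p \in \dfS_p \text{ and } \sigma_I \in \Sigma_E}
\]
is a complete polyhedral complex in $N_{E,\Q}$, where crucially the index $\sigma_I$ ranges over \emph{all} cones of the fan $\Sigma_E$ simultaneously. Completeness means two things: that the union of all these polyhedra is all of $N_{E,\Q}$, and that the collection is closed under taking faces with pairwise intersections being common faces. First I would record the elementary observation, from the definition of $\rho$ and $\alpha$ in \eqref{equ:defalprh}, that $\rho$ restricted to $N \oplus N_E$ maps onto $N_E$ with kernel exactly $\alpha(N) \cong N$, and that the ambient space splits as $\tilde N_\Q = \alpha(N_\Q) \oplus \bigl(N \oplus \sigma_I\bigr)_\Q$ appropriately so that $\rho$ is injective on each slab $N \oplus \sigma_I$ by the computation already carried out in Lemma \ref{facemapface}.

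The key step is to reduce the global statement to the surjectivity of $\rho$ combined with the completeness of the slices $\dfS_p$ and the fan $\Sigma_E$. The natural strategy is: since $\dfS_p$ is a complete subdivision of $N_\Q$ (Remark \ref{remark2.10}) and $\Sigma_E$ is a complete fan describing $E_T^N$ (Lemma \ref{lem:fanENTtv} — here I would need that $\Sigma_E$ is complete, which follows because $(\A^N \setminus \setl 0)^d$ is covered by the charts $U_{\sigma_I}$), the products $\set{\Delta_p \times \sigma_I}{\Delta_p \in \dfS_p, \sigma_I \in \Sigma_E}$ form a complete polyhedral complex in $(N \oplus N_E)_\Q = \tilde N_\Q$. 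This uses Lemma \ref{productoffaces}, which guarantees that the product complex is again a polyhedral complex (faces of products are products of faces) and that completeness is inherited: every point of $\tilde N_\Q$ lies in some $\Delta_p$ and some $\sigma_I$, hence in their product. I would then push this complete complex forward under the surjective linear map $\rho$. The image of a complete complex under a surjective linear map is complete, because surjectivity forces the images of the top-dimensional cells to cover the target, and Lemma \ref{facemapface} ensures the face structure is preserved cell by cell since $\rho$ is injective on each $N \oplus \sigma_I$.

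The main obstacle will be verifying that the images $\rho(\Delta_p \times \sigma_I)$ for \emph{different} cones $\sigma_I, \sigma_{I'}$ fit together into a genuine polyhedral complex — that is, that $\rho(\Delta_p \times \sigma_I) \cap \rho(\Delta_{p'} \times \sigma_{I'})$ is a common face of both. Unlike the single-$\sigma_I$ case, here $\rho$ is only injective on each individual slab $N \oplus \sigma_I$, not on all of $\tilde N_\Q$, so two polyhedra lying in different slabs could in principle overlap in a way not reflected by a face of the preimage complex. The way I would handle this is to observe that whenever $\sigma_I$ and $\sigma_{I'}$ share a common face $\sigma_J = \sigma_I \cap \sigma_{I'}$ in $\Sigma_E$, the corresponding products meet along $\Delta_p \times \sigma_J$ (using Lemma \ref{productoffaces}), and $\rho$ is injective on the union $N \oplus (\sigma_I \cup \sigma_{I'})$ restricted appropriately; one then argues that the intersection of the two images equals $\rho(\Delta_p \times \sigma_J)$, a common face by Lemma \ref{facemapface}. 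Concretely, I expect the cleanest formulation is to first show that the products form a complete complex upstairs, then invoke the general principle that a surjective lattice map sends a complete polyhedral complex to a complete polyhedral complex provided the map is injective on each maximal cell — a condition we have verified slab by slab. Assembling these pieces gives completeness of the image collection, which is exactly the statement of Lemma \ref{slicecomplete}.
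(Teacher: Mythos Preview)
Your argument has a genuine gap at its very first reduction: the fan $\Sigma_E$ is \emph{not} complete. The variety $E_T^N=(\A^N\setminus\{0\})^d$ is not proper, and correspondingly the support of $\Sigma_E$ is only $(\partial\theta)^d\subset N_{E,\Q}$, where $\theta$ is the first orthant in $\Q^N$; each maximal cone $\sigma^{i}$ lies in the hyperplane $x_i=0$. So the product complex $\set{\Delta_p\times\sigma_I}{\Delta_p\in\dfS_p,\ \sigma_I\in\Sigma_E}$ covers only $N_\Q\times|\Sigma_E|$, a proper subset of $\tilde N_\Q$, and your ``push forward a complete complex along a surjection'' strategy never gets off the ground. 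Being covered by affine charts $U_{\sigma_I}$ is of course not the same as the fan being complete.

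What actually makes the image complete is a cancellation between the incompleteness of $\Sigma_E$ and the kernel of $\rho$, and the paper's proof exploits this explicitly. Recall $\rho(a,b)=b-J(a)$, where $J(a)$ repeats the coordinate $a_i$ across the $i$-th block of $N$ entries. Given an arbitrary $z\in N_{E,\Q}$, the paper sets $a_i:=\min\bigl(z_{(i-1)N+1},\dotsc,z_{iN}\bigr)$; then $b:=z-J(a)$ has all entries $\ge 0$ with at least one zero in each block, so $b\in\sigma_I$ for the appropriate $I$, while $-a$ lies in some $\Delta_p$ by completeness of $\dfS_p$. Thus $z=\rho(-a,b)\in\rho(\Delta_p\times\sigma_I)$. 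In other words, the diagonal direction $\alpha(N)=\ker\rho$ is used to translate any point into the support of $N_\Q\times\Sigma_E$ before projecting; this is exactly the content that your abstract argument cannot supply. You should replace the appeal to completeness of $\Sigma_E$ by this explicit preimage construction (or an equivalent observation that $|\Sigma_E|+\alpha(N_\Q)=\tilde N_\Q$).
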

\begin{proof}
   Suppose $(a,b) \in \tilde{N}$, where $a=(a_1,a_2 \dotsc a_d) \in N$ and $b=(b_1, \dotsc b_{Nd})$ then $(a,b) \mapsto b-J(a)$. Given a element $z=(z_1 \dotsc z_N,z_{N+1}, \dotsc z_{2N} \dotsc z_{Nd}) \in N_E $ we choose $a_i$ minimum in the $N$-tuple $(z_{(i-1)N+1 , \dotsc z_{iN}})$, for $0 \leq i \leq d$. There is $b \in \sigma_I$ for some $I$, such that $(-a,b) \mapsto z$ (or $z=b-J(-a))$) and choose appropriate $\Delta_p \in \dfS_p$, such that $-a \in \Delta_p$.
\end{proof}

We are going to glue $X(\dfS_{\plD \times \sigma_I})$ for $\plD \in \dfS$ and $\sigma_I \in \Sigma_I$ to construct $\widetilde{Y_C}$. The map $\rho$ is a linear and injective on each $\Delta_p \times \sigma_I$, and faces map to faces (\ref{facemapface}). Consider set $P_{\plD \times \sigma_I}=\set{p \in Y}{\Delta_p \neq 0, \phi}$ and $E_{\plD \times \sigma_I}= \underset{p \in P_{\plD \times \sigma_I}}{\sum} p$. Then, $\dfS_{\plD \times \sigma_I}$ is divisorial fan given by the set of intersections of elements of following set 
\[ C_{\plD \times \sigma_I}= \setl{ \rho(\Delta_p \times \sigma_I) \otimes p + \phi \otimes (E_{\plD \times \sigma_I} - p)} .\]

From \cite[Section 4]{iltenvollmert:upgrading}, $\dfS_{\plD \times \sigma_I}$ is contraction free.
Consider divisorial fan $\widetilde{\dfS_C}$ set of intersection of elements of set \[C_{\dfS \times \Sigma_E}=\underset{\plD \times \sigma_I \in \dfS \times \Sigma_E}{\bigcup} C_{\plD \times \sigma_I}\]
and
\begin{notation}
    
 Let $\widetilde{Y_C}= X(\widetilde{\dfS_C}) ,$  and $\pi_C : \widetilde{X \times E^N_T} \to \widetilde{Y_C} $ be the canonical good quotient map.
 
 \end{notation}

\[
  \begin{tikzcd}
    \widetilde{X \times E^N_T} \arrow[r, "r_{E}"] \arrow[d, "\pi_C"] &
    X \times E^N_T  \\
    \widetilde{Y_C}  \arrow[d, "\pi"] \\
    Y.
  \end{tikzcd}
\]
From the discussion above, we get the following lemma.
\begin{lemma}
  The divisorial fan $\widetilde{\dfS_C}$ over $Y$ constructed above, gives us the $T_E$-variety $\widetilde{Y_C}$ which corresponds to the good quotient of   $\widetilde{X \times E^N_T}$ by the action of $T$.
\end{lemma}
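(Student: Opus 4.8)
The plan is to assemble the claimed statement from the pieces already established in the excerpt, treating it essentially as a bookkeeping consolidation of the downgrading construction. The key point to verify is that the set $C_{\dfS \times \Sigma_E}$, obtained by taking all pairwise intersections of the polyhedral divisors in $\bigcup_{\plD \times \sigma_I} C_{\plD \times \sigma_I}$, really does constitute a divisorial fan over $Y$, and that the $T_E$-variety $\widetilde{Y_C} = X(\widetilde{\dfS_C})$ it defines is canonically identified with the good quotient $(\widetilde{X \times E^N_T}) \gitquot T = \pi_C^{-1}$-target.

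First I would recall that for each individual pair $\plD \times \sigma_I$ with $\plD \in \dfS$ and $\sigma_I \in \Sigma_E$, Proposition \cite[Proposition 5.1]{iltenvollmert:upgrading} together with the preceding discussion gives that $\dfS_{\plD \times \sigma_I}$ is a divisorial fan whose slices are $\Xi_p = \rho(\Delta_p \times \sigma_I)$-subdivisions, and that $X(\dfS_{\plD \times \sigma_I})$ is the good quotient of $X(\plD) \times U_{\sigma_I}$ by $T$; moreover Remark after \cite[Proposition 4.2]{iltenvollmert:upgrading} (and \cite[Section 4]{iltenvollmert:upgrading}) guarantees each such fan is contraction free. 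Second, I would check compatibility of these local descriptions under gluing: whenever $\plD' \prec \plD$ is a face and $\sigma_J \prec \sigma_I$ is a face of cones in $\Sigma_E$, the open inclusion $X(\plD') \times U_{\sigma_J} \hookrightarrow X(\plD) \times U_{\sigma_I}$ is $T$-equivariant, so passing to good quotients (which commute with the relevant open immersions) yields $X(\dfS_{\plD' \times \sigma_J}) \hookrightarrow X(\dfS_{\plD \times \sigma_I})$ as an open subvariety; this is exactly the condition that the corresponding polyhedral divisors sit as faces inside one another, hence that their union closed under intersection — namely $\widetilde{\dfS_C}$ — is a divisorial fan. Here Lemma \ref{productoffaces}, Lemma \ref{facemapface} and Lemma \ref{slicecomplete} feed in: the first two show $\rho$ carries the face lattice of $\Delta_p \times \sigma_I$ bijectively onto that of $\rho(\Delta_p \times \sigma_I)$, so faces of the downgraded polyhedra are again downgraded polyhedra, and the third shows the resulting slices of $\widetilde{\dfS_C}$ are complete when the slices $\dfS_p$ are — which is the situation of interest since $X = X(\dfS)$ is a complete complexity one $T$-variety (Remark \ref{remark2.10}).

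Third, having assembled $\widetilde{\dfS_C}$ as a genuine divisorial fan over $Y$, I would invoke Lemma \ref{pro:prodtvar} on the product $X(\dfS) \times E^N_T$ and then Theorem \ref{thm:affinetvar} (good quotient part) locally on each chart: the good quotient of $X(\plD) \times U_{\sigma_I}$ by the $T$-factor is $X(\dfS_{\plD \times \sigma_I})$, these glue along the inclusions verified in the previous step to the good quotient of all of $\widetilde{X \times E^N_T} = X(\dfS_{\tilde X})$ by $T$, and the glued object is by construction $X(\widetilde{\dfS_C}) = \widetilde{Y_C}$. The residual $T_E$-action on $\widetilde{X \times E^N_T}$ (coming from the $T_E$-factor in $\tilde T = T \times T_E$) descends to $\widetilde{Y_C}$ since it commutes with the quotiented $T$-action, giving the stated $T_E$-variety structure, and $\pi_C$ is the canonical good quotient morphism. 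This completes the proof.

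The main obstacle, and the step deserving the most care, is the gluing compatibility in the second paragraph: one must ensure that the local downgrade data $\dfS_{\plD \times \sigma_I}$ and $\dfS_{\plD' \times \sigma_J}$ agree on overlaps in the precise combinatorial sense required for a divisorial fan (closure under intersection, with each intersection a common face), rather than merely abstractly identifying the glued quotient. Two sources of friction appear here: the choice $u_i = 0 \in \relint \pi(\tilde\omega_i) = M$ must be made consistently across all charts (it is, since $0$ works uniformly), and the coarsest-subdivision description of $\Xi_p$ must be shown to be stable under passing to faces $\sigma_J \prec \sigma_I$ — which is exactly where Lemmas \ref{productoffaces}--\ref{facemapface} do the work, reducing the claim to the statement that $\rho$ preserves face lattices. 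Once that stability is in hand, the gluing is formal and the rest of the argument is a routine application of the equivalence of categories in Theorem \ref{thm:affinetvar}.
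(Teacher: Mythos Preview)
Your proposal is correct and follows essentially the same approach as the paper: the paper offers no separate proof for this lemma, prefacing it only with ``From the discussion above, we get the following lemma,'' so the argument is precisely the consolidation of the preceding downgrading construction (Ilten--Vollmert \S5.1, the divisorial-polyhedron description, and Lemmas~\ref{productoffaces}--\ref{slicecomplete}) that you spell out. Your explicit attention to the gluing compatibility and the uniform choice $u_i = 0$ makes precise what the paper leaves implicit, but the route is the same.
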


Consider the following setup. Let $\Xi$ be a marked fansy divisor over $\mathbb{P}^1$, and $C$ denotes a collection of marked or contracted cones. For $\Xi$ there is a complete divisorial fan $\dfS$ such that $\Xi_p= \dfS_p$ for all $p \in \mathbb{P}^1 $. Consider complete complexity one $T$-variety, $X=X(\dfS)=X(\Xi)$.  Consider the divisorial fan \[ \dfS \times \Sigma_E = \set{\plD \times \sigma_I}{ \plD \in \dfS \; \text{and} \; \sigma_I \in \Sigma_E} \]
where $\plD = \sum \Delta_p \otimes p \in \dfS$, we defined pp-divisor,
\[ \rho(\plD \times \sigma_I)=\sum \rho(\Delta_p \times \sigma_I)  \otimes p .\]

\begin{lemma}
    The collection $\dfS_{Y_C}= \set{\rho(\plD \times \sigma_I)}{\plD \in \dfS \; \text{and} \; \sigma_I \in \Sigma_E}$ is a divisorial fan, Moreover it is complete.
\end{lemma}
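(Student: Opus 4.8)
The plan is to verify the two defining conditions of a divisorial fan for the collection $\dfS_{Y_C}$, namely that it is closed under intersection and that any two elements share a common face, and then separately establish completeness. First I would observe that for a fixed $\sigma_I \in \Sigma_E$ the assignment $\plD \mapsto \rho(\plD \times \sigma_I)$ is compatible with the lattice-theoretic operations: since $\rho$ restricted to $N \times \sigma_I$ is injective and linear (as noted in the proof of Lemma \ref{facemapface}), it sends the intersection $\plD \cap \plD'$ to $\rho((\plD \cap \plD') \times \sigma_I) = \rho(\plD \times \sigma_I) \cap \rho(\plD' \times \sigma_I)$, slicewise, using that intersection of polyhedral divisors is taken coefficientwise. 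For varying $\sigma_I, \sigma_{I'} \in \Sigma_E$ I would use that $\Sigma_E$ is itself a fan (Lemma \ref{lem:fanENTtv}), so $\sigma_I \cap \sigma_{I'} = \sigma_{I''}$ for some $I''$, and combine this with Lemma \ref{productoffaces} which identifies $L(\Delta_p \times \sigma_I)$ with $L(\Delta_p) \times L(\sigma_I)$; together with $\dfS$ being a divisorial fan this gives closure under intersection and the face-pair condition for $\dfS_{Y_C}$.

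Next I would address the face relations needed to glue: by Lemma \ref{facemapface}, $L(\rho(\Delta_p \times \sigma_I)) = \set{\rho(\Delta)}{\Delta \prec \Delta_p \times \sigma_I}$, so faces of the sliced polyhedra in $\dfS_{Y_C}$ are exactly the images under $\rho$ of faces of the product polyhedra $\Delta_p \times \sigma_I$, and the face relation $\plD' \prec \plD$ in $\dfS$ (together with $\sigma_{I'} \prec \sigma_I$) translates through $\rho$ into the corresponding open-embedding relation $\rho(\plD' \times \sigma_{I'}) \prec \rho(\plD \times \sigma_I)$; this uses that $\rho$ being injective and linear on the relevant cones preserves the combinatorics that controls when $X(\plD') \to X(\plD)$ is an open embedding. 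This, with the relation $\Xip = {\dfS_{\plD \times \sigma_I}}_p$ from Proposition \ref{pro:XtEc1tvr}'s companion result and the preceding construction of $\widetilde{\dfS_C}$ from the pieces $C_{\plD \times \sigma_I}$, identifies $\dfS_{Y_C}$ with the glued object $\widetilde{\dfS_C}$ already shown to be a divisorial fan.

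For completeness, I would invoke Lemma \ref{slicecomplete}: since $X = X(\dfS)$ is a complete complexity one $T$-variety, each slice $\dfS_p$ is a complete polyhedral subdivision of $\NQ$ (Remark \ref{remark2.10}), and Lemma \ref{slicecomplete} then shows $\set{\rho(\Delta_p \times \sigma_I)}{\Delta_p \in \dfS_p,\ \sigma_I \in \Sigma_E}$ is a complete polyhedral complex for each $p$; the explicit surjectivity argument there (choosing, for a target point $z \in N_E$, the minima $a_i$ over the relevant blocks of coordinates, then a suitable $b \in \sigma_I$ and $\Delta_p \in \dfS_p$) is exactly what produces a preimage, so every point of the ambient space lies in some cone of the slice. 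Since in addition $Y = \mathbb{P}^1$ is complete, the divisorial fan $\dfS_{Y_C}$ is complete.

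The main obstacle I expect is the careful bookkeeping in the intersection-closure step when the $\sigma_I$ vary: one must check that $\rho(\Delta_p \times \sigma_I) \cap \rho(\Delta'_p \times \sigma_{I'})$ genuinely equals $\rho((\Delta_p \cap \Delta'_p) \times (\sigma_I \cap \sigma_{I'}))$ rather than something larger, which relies on the images $\rho(N \times \sigma_I)$ fitting together correctly inside $N_E$ — precisely the content packaged by Lemmas \ref{productoffaces} and \ref{facemapface}, but requiring one to track the marking/contraction data $C$ so that the pp-divisor (properness, semiampleness) conditions are inherited. The completeness direction, by contrast, is essentially immediate from Lemma \ref{slicecomplete} once the divisorial-fan structure is in place.
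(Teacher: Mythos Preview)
Your argument for the divisorial-fan face relations and for completeness is essentially the paper's: both routes rest on Lemmas~\ref{productoffaces}, \ref{facemapface}, and \ref{slicecomplete} together with the completeness criterion for complexity-one slices. On that part you are aligned with the paper.

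There is, however, a genuine omission. A divisorial fan is by definition a finite set of \emph{proper} polyhedral divisors satisfying the intersection/face conditions, and you never verify that each $\rho(\plD \times \sigma_I)$ is a pp-divisor on $\mathbb{P}^1$; you only mention in passing that ``properness, semiampleness conditions are inherited'' as something to be tracked. The paper treats this as the first and main step: it invokes the complexity-one characterisation of pp-divisors from \cite[Section~2]{ah:affinetvar} and checks explicitly that (i) each coefficient $\rho(\Delta_p \times \sigma_I)$ has tail cone $\rho(\tail(\plD)\times\sigma_I)$, (ii) $\deg(\rho(\plD\times\sigma_I))=\rho(\deg(\plD\times\sigma_I))$ is a proper subset of that tail cone, and (iii) for $u$ in the dual cone one has $\rho(\plD\times\sigma_I)(u)=(\plD\times\sigma_I)(u\circ\rho)$, so the required principality/semiampleness transfers from the already-known pp-divisor $\plD\times\sigma_I$. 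Without this step your intersection and face bookkeeping, even if correct, only shows that $\dfS_{Y_C}$ is a fan of polyhedral divisors, not a divisorial fan in the sense used throughout the paper.
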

\begin{proof}
   First, we are going to prove that $\rho(\plD \times \sigma_I)$ is pp-divisor on the curve $\Pl^1$. Observe that $\plD \times \sigma_I$ is pp-divisor. For the complexity one case we have an equivalent definition of pp-divisor from \cite[Section 2]{ah:affinetvar}, and we have $\text{deg}(\rho(\plD \times \sigma_I)=\rho(\text{deg}(\plD \times \sigma_I)).$ 
  The following conditions hold.
   \begin{itemize}
       \item $\rho(\plD \times \sigma_I)=\sum \rho(\Delta_p \times \sigma_I)  \otimes p$, with $\Delta_p \times \sigma_I$ is polyhedron with tail cone $\text{tail}(\plD) \times \sigma_I$ and the sum runs over all disjoint $p$'s.
       \item Since $\text{deg}(\plD \times \sigma_I)$  is proper subset of $ \text{tail}(\plD) \times \sigma_I$, $\text{deg}(\rho(\plD \times \sigma_I))$ is a proper subset of $ \rho( \text{tail}(\plD) \times \sigma_I)$.
       \item For $u \in (\text{tail}(\plD) \times \sigma_I)^\vee$, we have $ \text{eval}_u(\text{deg}(\rho(\plD \times \sigma_I)))= \text{eval}_{u\circ \rho}(\text{deg}(\plD \times \sigma_I))$ and that $\rho(\plD \times \sigma_I)(u) = (\plD \times \sigma_I)(u \circ \rho ) $ is principal.
   \end{itemize}
   To prove $\dfS_{Y_C}$ is a divisorial fan, we take two pp-divisors $\rho(\plD \times \sigma_I)$ and $\rho(\plD^\prime \times \sigma_J)$. Since  $\sigma_J \succ \sigma_J \cap \sigma_I \prec \sigma_I$, $\plD^\prime \succ \plD^\prime \cap \plD \prec \plD$ and from \ref{productoffaces}, \ref{facemapface} and \ref{slicecomplete}, we can conclude that \[ \rho(\plD \times \sigma_I) \succ \rho(\plD \times \sigma_I) \cap \rho(\plD^\prime \times \sigma_J) \prec \rho(\plD^\prime \times \sigma_J). \]
    From the criterion of completeness and the above lemma, $\dfS_{Y_C}$ is complete.
\end{proof}
Now consider the description of $\dfS_{Y_C}$ as a marked fansy divisor $\Xi_{Y_C}= \sum \dfS_{Y_C,p} \cdot p$ where, $\dfS_{Y_C,p}=\set{\rho(\Delta_p \times \sigma_I )}{\Delta_p \in \dfS_p \; \text{and} \; \sigma_I \in \Sigma_E}$ and marked cones are $C_{Y_C}=\set{\sigma \times \sigma_I}{ \; \sigma \in C}.$


\begin{lemma}
    From above lemma, we have the following diagram 

    \[ \begin{tikzcd}
      \widetilde{X(\dfS_{Y_C})} \arrow[r, "r"] \arrow[d, "\pi"] & X(\dfS_{Y_C}) \\
        \mathbb{P}^1
    \end{tikzcd},\] 
    where $\widetilde{X(\dfS_{Y_C})} =\widetilde{Y_C}.$
\end{lemma}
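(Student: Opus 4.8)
The plan is to package the constructions of the previous two lemmas into the standard quotient diagram for the $T$-variety $X(\dfS_{Y_C})$ and then identify the total space of that diagram with $\widetilde{Y_C}$. First I would invoke Theorem~\ref{thm:affinetvar} (and its globalization to divisorial fans from \cite{ahs:gentvar}): since $\dfS_{Y_C}$ is a complete divisorial fan over $\Pl^1$, it determines a complexity one $T_E$-variety $X(\dfS_{Y_C})$ together with its canonical good quotient $\pi$ onto the Chow quotient and the proper birational contraction $r$ from the space $\widetilde{X(\dfS_{Y_C})} = \relSpec_{\Pl^1}\shfA$ glued from the local relative spectra. This immediately gives the displayed diagram; the only content is the final identification $\widetilde{X(\dfS_{Y_C})} = \widetilde{Y_C}$.

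For that identification I would argue locally and then glue. Recall that $\widetilde{Y_C} = X(\widetilde{\dfS_C})$ was built by gluing the pieces $X(\dfS_{\plD \times \sigma_I})$, and by the remark after \cite[Proposition 4.2]{iltenvollmert:upgrading} each $\dfS_{\plD \times \sigma_I}$ is contraction free, so on each piece the contraction map $r$ is an isomorphism and $\widetilde{X(\dfS_{\plD\times\sigma_I})} = X(\dfS_{\plD\times\sigma_I})$. On the other hand, by the proposition attributed to \cite[Proposition 5.1]{iltenvollmert:upgrading}, the slices of $\dfS_{\plD\times\sigma_I}$ are exactly the $\Xi_p$, and by the preceding lemma these agree with the slices $\dfS_{Y_C,p} = \set{\rho(\Delta_p\times\sigma_I)}{\Delta_p\in\dfS_p}$; so the local pieces of $\dfS_{Y_C}$ (restricted to their loci) coincide with the contraction-free fans $\dfS_{\plD\times\sigma_I}$ up to the bookkeeping of empty coefficients. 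Hence on each chart $\widetilde{X(\dfS_{Y_C})}$ and $\widetilde{Y_C}$ are canonically isomorphic over $\Pl^1$, compatibly with the $T_E$-action and the good quotient maps.

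Next I would check that these local isomorphisms glue. Both $\widetilde{X(\dfS_{Y_C})}$ and $\widetilde{Y_C}$ are obtained by gluing their affine charts along the faces of the respective divisorial fans, and by Lemmas~\ref{productoffaces}, \ref{facemapface} and \ref{slicecomplete} the relation $\rho(\plD\times\sigma_I)\succ\rho(\plD\times\sigma_I)\cap\rho(\plD'\times\sigma_J)\prec\rho(\plD'\times\sigma_J)$ (established in the previous lemma) matches the gluing data for $\widetilde{\dfS_C}$, which was defined via the same intersections $C_{\plD\times\sigma_I}$. So the chartwise isomorphisms agree on overlaps and assemble to a global $T_E$-equivariant isomorphism $\widetilde{X(\dfS_{Y_C})}\xrightarrow{\sim}\widetilde{Y_C}$ over $\Pl^1$, and the diagram of the previous lemma (with $\pi_C$) is identified with the canonical diagram above. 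Since each of Lemmas~\ref{productoffaces}--\ref{slicecomplete} has already been proved, this step is purely a matter of assembling them.

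The main obstacle I anticipate is the careful treatment of empty coefficients: $\widetilde{\dfS_C}$ is described through sets $C_{\plD\times\sigma_I}$ of pp-divisors carrying $\phi$-coefficients on $E_{\plD\times\sigma_I}-p$, whereas $\dfS_{Y_C}$ was presented via the slicewise data $\dfS_{Y_C,p}$ with marked cones $C_{Y_C}=\set{\sigma\times\sigma_I}{\sigma\in C}$; one must verify that the loci $\text{Loc}$ cut out by the empty coefficients in the two presentations coincide and that the marking (contracted cones) produced from $C_{Y_C}$ is precisely the one under which $\widetilde{Y_C}$ contracts to $X(\dfS_{Y_C})$. Granting the dictionary between marked fansy divisors and divisorial fans (Remark~\ref{remark2.10} and \cite[Proposition 1.6]{iltenpolarized}), this is bookkeeping rather than a genuine difficulty, and the identification $\widetilde{X(\dfS_{Y_C})}=\widetilde{Y_C}$ follows.
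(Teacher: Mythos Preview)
Your proposal is correct but more elaborate than needed. The paper dispatches the identification in two lines: both $\widetilde{X(\dfS_{Y_C})}$ and $\widetilde{Y_C}$ are contraction free (the former by construction of the tilde-space, the latter because $\widetilde{\dfS_C}$ was assembled from the contraction-free fans $\dfS_{\plD\times\sigma_I}$), and their slices agree, ${\dfS_{Y_C}}_p = {\widetilde{\dfS_C}}_p$. One then invokes the classification of contraction-free complexity-one $T$-varieties by their slices (the marked-fansy-divisor dictionary, \cite[Proposition~1.6]{iltenpolarized}) and is done; no local-to-global gluing is required. You instead work chart by chart, produce local isomorphisms, and then verify compatibility on overlaps via the face relations of Lemmas~\ref{productoffaces}--\ref{slicecomplete}. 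This is valid, and you do cite the same Proposition~1.6 at the end, but the gluing step and the discussion of empty coefficients and loci are all absorbed by that single global invocation once you know both fans are contraction free with identical slices. Your route makes the chart structure explicit; the paper's route is simply shorter.
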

\begin{proof}
    Observe that $\widetilde{X(\dfS_{Y_C})} $ and $\widetilde{Y_C}$  both are contraction free with $ {\dfS_{Y_C}}_p = {\widetilde{\dfS_C}}_p$.  Here the varieties are contraction free, means that the corresponding divisorial fans are contraction free. Then, it is enough to prove that they have the same slices.  Hence, by \cite[Proposition 1.6]{ahs:gentvar}, $\widetilde{X(\dfS_{Y_C})} =\widetilde{Y_C}.$
\end{proof}
\begin{notation}
    $Y_C :=X(\dfS_{Y_C})$
\end{notation}

\begin{remark}

Note that $X \times E^N_T$ is a complexity one $T$-variety under the action of $T \times T_E$, but not complete. But $\widetilde{Y_C}$ is complete complexity one $T$-variety. 
\end{remark}

\begin{proposition}
    Consider the  morphism of pp-divisors $\plD \times \sigma_I \to \rho(\plD \times \sigma_I)$ given by the triple $(i,\rho,1)$ where $i$ is the identity morphism on $\Pl^1$, and 1 is the unit plurifunction. The map $q: X \times E^N_T \to Y_C$ induced by the morphism of pp-divisors is a geometric quotient.
\end{proposition}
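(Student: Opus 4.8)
The plan is to establish, in turn, that $q$ is a well-defined $T$-invariant surjective morphism, that it is a good quotient for the $T$-action, and that this $T$-action is free, whence the good quotient is automatically geometric. For the first of these, I would check that for each $\plD\in\dfS$ and $\sigma_I\in\Sigma_E$ the triple $(i,\rho,1)$ really is a morphism of pp-divisors $\plD\times\sigma_I\to\rho(\plD\times\sigma_I)$: since $i=\mathrm{id}_{\Pl^1}$ and the plurifunction is trivial, this comes down to the facts that $\rho$ is linear, carries $\tail(\plD)\times\sigma_I$ onto $\tail(\rho(\plD\times\sigma_I))$, and satisfies $\rho(\plD\times\sigma_I)(u)=(\plD\times\sigma_I)(u\circ\rho)$ on the relevant dual cone---all recorded above. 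By functoriality of the Altmann--Hausen construction (\cite[Corollary 8.14]{ah:affinetvar}), each such morphism induces a $\bar\rho$-equivariant morphism $X(\plD)\times U_{\sigma_I}=X(\plD\times\sigma_I)\to X(\rho(\plD\times\sigma_I))$, where $\bar\rho\colon\tilde T=T\times T_E\to T_E$ is the torus surjection with cocharacter map $\rho$. Since $\rho$ is a single global linear map that takes faces to faces (Lemma~\ref{facemapface}) and intersections of pp-divisors to intersections (Lemma~\ref{productoffaces}), these local morphisms agree on the overlaps along which $X\times E^N_T$ is glued from the $X(\plD)\times U_{\sigma_I}$ and $Y_C$ from the $X(\rho(\plD\times\sigma_I))$, hence patch to the morphism $q$. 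Finally $\rho\circ\alpha=0$ by the exact sequence \eqref{equ:defalprh}, and, $N_E$ being torsion free, $\ker\bar\rho$ equals the image of $\alpha$---precisely the diagonally embedded copy of $T$ acting on $X\times E^N_T$; so $q$ is $T$-invariant, and it is surjective by Lemma~\ref{slicecomplete}.

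For the second point, each local model $X(\plD\times\sigma_I)\to X(\rho(\plD\times\sigma_I))$ is exactly the downgrade of \cite[Section~5.1, Theorem~5.2]{iltenvollmert:upgrading}, whose target is $\Spec$ of the ring of $T$-invariants of the source; equivalently, in the notation of the diagrams above, $\pi_C$ is the Altmann--Hausen good quotient of $\widetilde{X\times E^N_T}$ (\cite[Theorems~3.1 and 3.4]{ah:affinetvar}). As the divisorial fans $\dfS_{\plD\times\sigma_I}$---and hence $\widetilde{\dfS_C}$ and $\dfS_{Y_C}$---are contraction free, the contractions $r_E$ and $\widetilde{Y_C}\to Y_C$ are isomorphisms, and under these identifications $q=\pi_C$. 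Concretely, I would check that the $\tilde T$-invariant affine opens $X(\plD)\times U_{\sigma_I}$ are $q$-saturated and that their images cover $Y_C$ by affine opens, so that the affine good quotients above glue to a good quotient $q\colon X\times E^N_T\to Y_C$.

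For the third point, the diagonal $T$-action restricts on the second factor to the standard scaling action of $T=(\C^*)^d$ on $E^N_T=\BGtorus{d}$, which is free because every coordinate block is nonzero; hence $T$ acts freely on all of $X\times E^N_T$. A good quotient by a free action of a reductive group is geometric: every orbit has dimension $\dim T$, so no orbit lies in the closure of another, and since each fibre of a good quotient contains a unique closed orbit lying in the closure of every orbit of that fibre, each fibre of $q$ is a single $T$-orbit. (Alternatively, $\A^N\setminus\setl{0}\to\Pl^{N-1}$ is a Zariski-locally trivial $\C^*$-torsor, so the usual untwisting isomorphism $(X\times E^N_T)|_U\cong(X\times U)\times T$ exhibits $q$ locally as a trivial $T$-bundle.) Therefore $q$ is a geometric quotient. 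I expect the second point to be the main obstacle: carefully matching the explicitly defined $q$ with the good quotient produced by the Altmann--Hausen/Ilten--Vollmert machinery, and in particular verifying the contraction-freeness that places $X\times E^N_T$ and $Y_C$ themselves---not merely their $\widetilde{(\,\cdot\,)}$-versions---into the good-quotient diagram; with that settled, the other two steps are routine.
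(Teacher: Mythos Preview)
Your overall strategy---identify the target ring with the $T$-invariants and then use freeness of the $T$-action to upgrade ``good'' to ``geometric''---is exactly the paper's, and your first and third steps are correct. The paper's proof is considerably more direct than yours: it simply writes down the graded ring homomorphism
\[
  q^{\#}_{\plD\times\sigma_I}\colon
  \bigoplus_{u\in\rho(\sigma\times\sigma_I)^{\vee}}\Gamma\bigl(\Pl^1,\mathcal{O}(\rho(\plD\times\sigma_I)(u))\bigr)
  \longrightarrow
  \bigoplus_{v\in(\sigma\times\sigma_I)^{\vee}}\Gamma\bigl(\Pl^1,\mathcal{O}((\plD\times\sigma_I)(v))\bigr),
\]
observes that $\min\langle\rho(\Delta_p\times\sigma_I),u\rangle=\min\langle\Delta_p\times\sigma_I,u\circ\rho\rangle$ makes this the inclusion of the $T$-invariant subring, and then invokes freeness. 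No appeal to $\pi_C$, to $r_E$, or to contraction-freeness is needed.

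There is a genuine gap in your second step. You assert that ``the divisorial fans $\dfS_{\plD\times\sigma_I}$---and hence $\widetilde{\dfS_C}$ and $\dfS_{Y_C}$---are contraction free, the contractions $r_E$ and $\widetilde{Y_C}\to Y_C$ are isomorphisms.'' The first clause is fine for $\widetilde{\dfS_C}$, but does not imply the same for $\dfS_{Y_C}$ or for $\dfS\times\Sigma_E$: when the original marked fansy divisor has $C\neq\emptyset$, the paper explicitly equips $\dfS_{Y_C}$ with the nonempty set of marked cones $C_{Y_C}=\{\sigma\times\sigma_I:\sigma\in C\}$, so $\dfS_{Y_C}$ is \emph{not} contraction free and $\widetilde{Y_C}\to Y_C$ is \emph{not} an isomorphism. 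Likewise any $\plD\in\dfS$ with $\mathrm{Loc}(\plD)=\Pl^1$ makes $\plD\times\sigma_I$ non-contraction-free, so $r_E$ is not an isomorphism either. Thus the identification $q=\pi_C$ you propose fails in general, and the detour through the $\widetilde{(\,\cdot\,)}$-diagram cannot be used. Your own ``concrete'' fallback---verifying on each $X(\plD)\times U_{\sigma_I}$ that $q$ is $\Spec$ of the invariant subring and then gluing---is the route that actually works, and it is precisely what the paper does.
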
 
\begin{proof} The morphism $(i,\rho,1)$ gives the following ring homomorphism \[  q^\#_{\plD \times \sigma_I} :\bigoplus_{u \in \rho(\sigma \times \sigma_I)^\vee}\Gamma(\Pl^1,\mathcal{O}(\rho(\plD \times \sigma_I)(u)) \to \bigoplus_{v \in (\sigma \times \sigma_I)^\vee}\Gamma(\Pl^1,\mathcal{O}(\plD \times \sigma_I)(v)). \] where
\[\Gamma(\Pl^1,\mathcal{O}(\rho(D))(u)) \to \Gamma(\Pl^1,\mathcal{O}(D(u\circ\rho)))\] is the identity map, because $\text{min}\langle\rho(\Delta_P \times \sigma_I),u\rangle = \text{min}\langle\Delta_p \times \sigma_I,u\circ\rho\rangle$. The map $q^\#_{\plD \times \sigma_I}$ is the inclusion map, in fact \[\bigoplus_{u \in \rho(\sigma \times \sigma_I)^\vee}\Gamma(\Pl^1,\mathcal{O}(\rho(\plD \times \sigma_I)(u))=\bigoplus_{v \in (\sigma \times \sigma_I)^\vee}\Gamma(\Pl^1,\mathcal{O}(\plD \times \sigma_I)(v))^T.\] As $T$ acts freely  on $X \times E^N_T  $, the induced map $q_{\plD \times \sigma_I}: X(\plD \times \sigma_I) \to X(\rho(\plD \times \sigma_I))$ is a geometric quotient; hence the map $q$ is a geometric quotient. 
\end{proof}

Summarizing the above, we get the following lemma.

\begin{lemma}
  The $T$-variety $Y_C$ constructed above is complete and fits into a diagram.
  \[
  \begin{tikzcd}
    \widetilde{X \times E^N_T} \arrow[r, "r_{E}"] \arrow[d, "\pi_C"] &
    X \times E^N_T \arrow[d, "q"] \\
   \widetilde{Y_C} \arrow[r] \arrow[d, "\pi"] &
    Y_C\\
    \mathbb{P}^1,
  \end{tikzcd}
  \]
  where $r_{E}$ is a $T$-equivariant birational proper morphism, and the maps $q$ and $\pi_C$ are geometric quotients.
\end{lemma}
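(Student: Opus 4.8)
The final lemma is essentially a bookkeeping summary of everything established in the section, so my plan is to assemble the pieces rather than prove anything genuinely new. Let me lay out the order.

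\textbf{Proof proposal.}
The plan is to verify each assertion of the lemma by invoking the results established earlier in this section and assembling the commutative diagram from the partial diagrams already drawn. First I would recall that completeness of $Y_C = X(\dfS_{Y_C})$ is exactly the content of the lemma showing $\dfS_{Y_C}$ is a complete divisorial fan (via Remark \ref{remark2.10}, i.e.\ each slice $\dfS_{Y_C,p}$ is a complete subdivision of $N_{E,\Q}$ by Lemma \ref{slicecomplete}, and the base $\Pl^1$ is complete). Next, the square on the left, $\widetilde{X \times E^N_T} \to X \times E^N_T$ over $\widetilde{Y_C} \to \Pl^1$, together with the identification $\widetilde{X(\dfS_{Y_C})} = \widetilde{Y_C}$, is precisely the diagram produced in the preceding lemma, and $r_E$ being a $T$-equivariant proper birational morphism is the statement of Theorem \ref{thm:affinetvar} applied to the divisorial fan $\widetilde{\dfS_C}$ (or $\dfS_{\tilde X}$) --- the contraction map of a pp-divisor description is always proper birational and $T$-equivariant. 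That $\pi_C$ is a good quotient is in the notation declaration introducing $\pi_C$ and the lemma preceding it; since $T$ acts freely on $X \times E^N_T$ (and hence on $\widetilde{X \times E^N_T}$, as $r_E$ is $T$-equivariant and $E^N_T$ carries a free $T$-action), the good quotient $\pi_C$ is in fact a geometric quotient. Finally, $q$ being a geometric quotient is exactly the Proposition immediately above, proved via the morphism of pp-divisors $(i,\rho,1)$ and the freeness of the $T$-action.

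The one point requiring a little care is the commutativity of the full rectangle, i.e.\ that the square
\[
  \begin{tikzcd}
    \widetilde{X \times E^N_T} \arrow[r, "r_{E}"] \arrow[d, "\pi_C"] &
    X \times E^N_T \arrow[d, "q"] \\
   \widetilde{Y_C} \arrow[r] & Y_C
  \end{tikzcd}
\]
commutes, where the bottom arrow $\widetilde{Y_C} \to Y_C$ is the contraction map $r$ of $\dfS_{Y_C}$. This I would check on the level of morphisms of (divisorial) pp-divisors: both composites $\widetilde{X \times E^N_T} \to Y_C$ are $T$-invariant, dominant, and correspond to the same map of the base $\Pl^1$ and the same lattice map $\rho$; by the equivalence of categories in Theorem \ref{thm:affinetvar} (the affine local statement, glued over the divisorial fan), two such morphisms agreeing on the combinatorial data agree. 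Equivalently, one notes that $q \circ r_E$ and $r \circ \pi_C$ are both the geometric quotient of $\widetilde{X \times E^N_T}$ by $T$ followed by the same identification, so they coincide by the universal property of the geometric quotient (which is what lets us glue the left square and the $q$-square along $\widetilde{X\times E^N_T}$).

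The main --- really the only --- obstacle is making sure the various quotients are compatible, i.e.\ that the square above commutes and that ``good quotient'' can be upgraded to ``geometric quotient'' on the $\pi_C$ side; everything else is a citation. I expect this to be handled in one or two sentences by the freeness of the torus action on $E^N_T$ (hence on everything over it) plus the universal property of geometric quotients, which forces the unique factorization making the rectangle commute. No further computation is needed.
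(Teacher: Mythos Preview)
Your proposal is correct and matches the paper's approach: the paper gives no separate proof at all, merely prefacing the lemma with ``Summarizing the above, we get the following lemma,'' so your plan of assembling the earlier pieces (completeness of $\dfS_{Y_C}$, the identification $\widetilde{X(\dfS_{Y_C})}=\widetilde{Y_C}$, Theorem~\ref{thm:affinetvar} for $r_E$, and the preceding Proposition for $q$) is exactly what is intended. If anything, your discussion of the square's commutativity and the upgrade of $\pi_C$ from good to geometric quotient via freeness of the $T$-action is more careful than what the paper makes explicit.
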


\section{Torus equivariant Chow groups of \tvars{}}
\label{sec:teqchtvr}

Let us continue the setup above, with $\text{tail}(\dfS)= \Sigma$. Denote the set of points $p \in \mathbb{P}^1$, such that slice $\dfS_p \neq \Sigma$, by $P$. We shall ensure that the size of $P$ is at least 2 by appending extra points, if necessary. For a non-negative integer $k \leq d+1$, we obtain the $k^{th}$ Chow group of a $T$-variety of complexity one, consider the following sets

\begin{itemize}
    \item $R_k =$  cones of dimension $d+1-k$ corresponding to subvarieties not contracted by $r$.
   
    \item $V_k=$  faces of dimension $d-k$ of polyhedral subdivision corresponding to the fiber of points in $P$.
    \item $T_k =$ cones of dimension $d-k$ corresponding to subvarieties contracted by $r$.
\end{itemize}
\begin{theorem}(\cite[Theorem 4.1]{Ndland2021})
   Given a complete, complexity one rational $T$-variety $X$, for $0 \leq k \leq \text{dim}(X)$, one has an exact sequence,
  \[ \begin{tikzcd}
      K \arrow[r] & \mathbb{Z}^{V_k} \oplus \mathbb{Z}^{R_k} \oplus \mathbb{Z}^{V_k} \arrow[r] & A_{k}(X) \arrow[r] & 0
   \end{tikzcd} \]      
    where arrows and lattice $K$ are defined in \cite[Theorem 4.1]{Ndland2021}.
\end{theorem}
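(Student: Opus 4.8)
The statement to be proved is precisely Nødland's Theorem 4.1 as recalled at the end of the excerpt: for a complete, complexity one rational $T$-variety $X$ and $0 \leq k \leq \dim(X)$, there is an exact sequence
\[
\begin{tikzcd}
K \arrow[r] & \Z^{V_k} \oplus \Z^{R_k} \oplus \Z^{V_k} \arrow[r] & A_k(X) \arrow[r] & 0
\end{tikzcd}
\]
with the maps and the lattice $K$ as specified in \cite[Theorem 4.1]{Ndland2021}. The plan is to produce a generators-and-relations presentation of $A_k(X)$ by exploiting the explicit combinatorial stratification of a complexity one $T$-variety coming from its divisorial fan, and then to match that presentation against the three index sets $R_k$, $V_k$, and $V_k$.

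\textbf{Step 1: generators from the orbit stratification.} First I would invoke the fact that $A_k(X)$ is generated by the classes of the closures of $k$-dimensional $T$-orbits, together with the combinatorial dictionary between orbits and the data of the divisorial fan. The three summands in the middle term reflect the three types of $k$-dimensional invariant subvarieties: those coming from cones of dimension $d+1-k$ whose associated subvariety is \emph{not} contracted by the contraction morphism $r$ (giving $\Z^{R_k}$), and those arising from the $(d-k)$-dimensional faces of the slices $\dfS_p$ over the special points $p \in P$ (the vertical directions, giving the two copies of $\Z^{V_k}$, one reflecting the fibral behaviour over $P$ and one reflecting the contracted contributions indexed by $T_k$). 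The key structural input here is the diagram relating $\widetilde{X}$, $X$, $\widetilde{Y_C}$ and $Y$ from Theorem \ref{thm:affinetvar} together with the proper birational contraction $r$, which lets me separate contracted from non-contracted loci.

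\textbf{Step 2: relations.} The relations come from rational equivalence, i.e.\ from divisors of rational functions on the $(k+1)$-dimensional invariant subvarieties. In the toric/complexity-one setting these rational functions are generated by characters of $T$ and by rational functions pulled back from the base curve $\Pl^1$. I would organize the relations so that the character relations account for the torus directions (these interact with the tail fan $\Sigma = \tail(\dfS)$), while the relations coming from rational functions on $\Pl^1$ — whose divisors are supported on the points of $P$ — account for the dependence among the fibral contributions over different points of $P$. Packaging all these relations as the image of a map from $K$ gives the left-hand map and forces the right exactness. The requirement $|P| \geq 2$, imposed just above the theorem, is exactly what is needed so that the degree-zero condition on principal divisors of $\Pl^1$ is correctly encoded in $K$.

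\textbf{Main obstacle.} The hard part will be \emph{surjectivity onto the relations with no spurious extra ones} — that is, proving that the displayed sequence is exact in the middle and not merely right-exact with $A_k(X)$ a quotient. Concretely, I must show that every rational equivalence among invariant $k$-cycles is accounted for by the image of $K$, and that the generating set is nonredundant beyond those relations. This requires a careful analysis of how the contraction morphism $r$ identifies or kills cycles: subvarieties contracted by $r$ (the $T_k$ data) contribute classes that may become equivalent in $X$ even when distinct in $\widetilde{X}$, so I would compute $A_k(X)$ by first understanding $A_k(\widetilde{X})$ on the contraction-free model $\widetilde{Y_C}$ and then pushing forward along $r$, using that $r$ is proper birational to control the kernel and cokernel of $r_*$. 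Since this is exactly the content of \cite[Theorem 4.1]{Ndland2021}, I would ultimately cite that proof for the precise description of $K$ and the three maps, and confine my own argument to verifying that the present setup — the complete rational complexity one $X = X(\dfS) = X(\Xi)$ with the stratification by $R_k$, $V_k$, $T_k$ — satisfies its hypotheses.
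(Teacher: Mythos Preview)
The paper does not prove this theorem at all: it is stated with attribution to \cite[Theorem 4.1]{Ndland2021} and immediately used as a black box to present the Chow groups of $Y_C$. There is no argument in the paper to compare your proposal against. Your own proposal, after two steps of heuristics, also lands on ``I would ultimately cite that proof,'' so in substance you agree with the paper's treatment; the surrounding sketch is extra commentary rather than an alternative proof.

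One caution about that commentary: you try to explain the middle term $\Z^{V_k} \oplus \Z^{R_k} \oplus \Z^{V_k}$ by saying the second $\Z^{V_k}$ ``reflects the contracted contributions indexed by $T_k$.'' That is almost certainly reading meaning into a typo. The paper defines three disjoint index sets $R_k$, $V_k$, $T_k$ just above the theorem and then computes $|r_k|+|v_k|+|t_k|$ from all three; in N{\o}dland's original the middle term is $\Z^{R_k}\oplus\Z^{V_k}\oplus\Z^{T_k}$. Your attempt to justify two literal copies of $\Z^{V_k}$ is therefore a gap in the sense that it rationalises an error rather than flagging it, and the explanation you give (fibral versus contracted) does not match how $V_k$ and $T_k$ are actually defined.
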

  Note that in our case, $Y_C$ is a $T$-variety defined by a pp-divisor over $\mathbb{P}^1$. Thus it contains an open subset, which is a product of an open subset of $\mathbb{P}^1$ with a torus, and hence $Y_C$ is rational.
 Using the above result, we will give a presentation of the equivariant Chow group of $X(\dfS)$ or, equivalently Chow group of $Y_C$. For non negative integer $k \leq Nd+1$, we define the following sets:
 \begin{itemize}
 \item $R^\prime_k =$ \# faces of dimension $k$ in $\Sigma_E$.
 \item $r_k =$  cones of dimension $Nd+1-k$ corresponding to subvarities not contracted by $r$.
   
    \item $v_k=$  faces of dimension $Nd-k$ of polyhedral subdivision corresponding to the fiber of points in $P$.
    \item $t_k =$ cones of dimension $Nd-k$ corresponding to subvarities contracted by $r$.
    \end{itemize}

The cardinalities of sets $r_k, v_k, t_k$ are given by following numbers:
    
    \begin{itemize}

      \item $|r_k|= R^\prime_{Nd-d}.|R_k| +R^\prime_{Nd-d-1}.|R_{k-1}| \dotsc R^\prime_{Nd-d-(k-1)}.|R_{1}|+ R^\prime_{Nd-d-k}.|R_{0}|$.

    \item $|v_k| = R^\prime_{Nd-d}.|V_{k}|+R^\prime_{Nd-d-1}.|V_{k-1}| \dotsc R^\prime_{Nd-d-(k-1)}.|V_{1}|+R^\prime_{Nd-d-k}.|V_{0}|$.

    \item $|t_k|= R^\prime_{Nd-d}.|T_{k}|+  R^\prime_{Nd-d-1}.|T_{k-1}|+ \dotsc  R^\prime_{Nd-d-(k-1)}.|T_{1}|+  R^\prime_{Nd-d-k}.|T_{0}|.$
 \end{itemize}

\section{$|r_k| + |v_k| + |t_k|$ is constant}
\label{sec:rkvktkcn}

From \cite[Proposition 6.3]{Ndland2021} and from above calculation we can state following result.

For
\begin{equation} \label{EqnforX}
S_i=|R_i| + |V_i| + |T_i| = 
\left\{
    \begin{array}{lr}
        \#\Sigma(d-i+1) + 2\#\Sigma(d-i), & \text{if } i < d;\\
         \#\Sigma(1) + \#P, & \text{if } i= d;\\
         0,& \text{if } i>d;
    \end{array}
\right.
\end{equation}
and 
\begin{equation}
    S_i^\prime=
 \left\{   \begin{array}{cc}
      
   R_{Nd-d-i}^\prime= \Sigma_T(Nd-d-i), & \text{if } i \leq Nd-d;\\
    0, & \text{if } i > Nd-d.
\end{array}
\right.\label{EqnforBG}
\end{equation}

\begin{proposition} \label{pro:rvksumcn}
    For any rank two toric vector bundle $\mathscr{E}$ on a smooth toric variety $X_{\Sigma} $ we have $X=P(\mathscr{E})$ and $X_\mathscr{E}=X \times \mathscr{E}^N_T/T$,The numbers $|r_k|$, $|v_k|$, and $|t_k|$ are associated with $X_\mathscr{E}$ then \[|r_k| + |v_k| + |t_k| = \sum_{i=0}^{i=k}S^\prime_{i} S_{k-i}.\]
    \begin{proof}
        Follows from \ref{EqnforX} and \ref{EqnforBG} and \cite[Proposition 6.3]{Ndland2021}
    \end{proof}
\end{proposition}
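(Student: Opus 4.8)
The plan is to reduce the statement to a purely combinatorial bookkeeping identity and then invoke the already-assembled machinery. First I would unwind the hypotheses: $\mathscr{E}$ is a rank two toric vector bundle on the smooth toric variety $X_\Sigma$, so $X = \Pl(\mathscr{E})$ is itself a smooth projective toric variety, and more importantly it is a complexity one $T$-variety whose tail fan data $\Sigma$ (and the point set $P$) feed into the quantities $S_i = |R_i| + |V_i| + |T_i|$ via \eqref{EqnforX}. The object $X_\mathscr{E} = (X \times \mathscr{E}^N_T)/T$ is exactly the quotient $Y_C$ constructed in Section~\ref{sec:esteqchg} for this particular $X$, so the sets $r_k, v_k, t_k$ attached to $X_\mathscr{E}$ are the ones whose cardinalities were computed at the end of Section~\ref{sec:teqchtvr} in terms of the $R'_j = \Sigma_E(j)$ and the $R_i, V_i, T_i$.

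Next I would carry out the summation. By the three displayed formulas for $|r_k|$, $|v_k|$, $|t_k|$, we have
\[
|r_k| + |v_k| + |t_k| = \sum_{j=0}^{k} R'_{Nd-d-j}\bigl(|R_{k-j}| + |V_{k-j}| + |T_{k-j}|\bigr)
= \sum_{j=0}^{k} R'_{Nd-d-j}\, S_{k-j}.
\]
Setting $i = j$ and using the definition \eqref{EqnforBG}, namely $S'_i = R'_{Nd-d-i} = \Sigma_E(Nd-d-i)$ for $i \le Nd-d$ and $S'_i = 0$ otherwise, this is precisely $\sum_{i=0}^{k} S'_i\, S_{k-i}$, which is the claimed identity. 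The only subtlety is matching the indexing convention in N{\o}dland's \cite[Proposition~6.3]{Ndland2021}: there $\mathscr{E}^N_T$ (a product of $(\A^N\setminus\{0\})$'s) plays the role of the toric approximation space, and one must confirm that the face-counting function $R'_\bullet$ used there agrees with $\Sigma_E(\bullet)$ from Lemma~\ref{lem:fanENTtv}, and that the grading shift by $Nd - d$ (the difference between $\dim E^N_T$ and the rank of the acting torus) is the same one appearing in the cardinality formulas above.

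The main obstacle, therefore, is not any deep geometry but a careful verification that the combinatorial recipe of Section~\ref{sec:teqchtvr} for $X_\mathscr{E}$ literally reproduces N{\o}dland's presentation in \cite[Proposition~6.3]{Ndland2021} with the stated index ranges — in particular that the truncations ``$i > d \Rightarrow S_i = 0$'' and ``$i > Nd-d \Rightarrow S'_i = 0$'' are exactly what cut off the convolution sum $\sum_{i=0}^k S'_i S_{k-i}$ at the right endpoints, so that no spurious terms are dropped or added. Once the dictionary between the two indexings is pinned down, the proposition follows by substituting \eqref{EqnforX} and \eqref{EqnforBG} into the convolution, exactly as indicated.
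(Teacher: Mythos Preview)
Your proposal is correct and is essentially an explicit unpacking of the paper's one-line proof: you add the three displayed cardinality formulas from Section~\ref{sec:teqchtvr}, recognize the resulting sum as the convolution $\sum_{i} R'_{Nd-d-i}\,S_{k-i}$, and then invoke \eqref{EqnforX} (which is \cite[Proposition~6.3]{Ndland2021}) and the definition \eqref{EqnforBG} to rewrite it as $\sum_i S'_i S_{k-i}$. This is exactly the route the paper intends; the remarks about index truncation are the only things one has to check, and they are routine.
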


\begin{example} \label{exa:rkvktkeg}
Consider the example \cite[Example 5.3]{Ndland2021}, for vector bundles $\mathscr{E}$ and $\mathscr{F}$ from \cite[Remark 6.5]{Ndland2021} $S_j$ are independent of $\mathscr{E}$ or $\mathscr{F}$. We also fix a large enough value of $N$, then we have an integer $|r_k| + |v_k| + |t_k| $
is independent of $\mathscr{E}$ or $\mathscr{F}$. Note that corresponding to $\mathscr{E}$, we have space  $\Pl(\mathscr{E}) \times \BGtorus{d}/T$, and for $\mathscr{F}$, we have $\Pl(\mathscr{F}) \times \BGtorus{d}/T$.
Consider $\mathbb{P}^2$ with the torus action induced by the fan given by the rays $\rho_1=(1,0)$,$\rho_2=(0,1)$, and $\rho_0=(-1,-1)$ and denote the associated divisors by $D_i$, for $i=1,2,0$, respectively. Consider rank two vector bundles given by $\mathscr{E}= D_1 \oplus 0$ and $\mathscr{F}= (D_1 +D_2) \oplus D_0$. Then we have that $\mathbb{P}(\mathscr{E})$ and $\mathbb{P}(\mathscr{F})$ are complexity one $T$-varieties of dimension 3. 

 The following table demonstrates the values of $|r_1|$, $|v_1|$, $|t_1|$, $|r_2|$, $|v_2|$ and $|t_2|$ in this case.
\begin{equation*}
 \begin{tabular}{|c||c|c|c|c|c|c|}
\hline
& $|r_2|$ & $|v_2|$ & $|t_2|$ & $|r_1|$ & $|v_1|$ & $|t_1|$  \\
\hline
\hline
     $(\mathbb{P}(\mathscr{E})\times E^3_2 ) /T $ & 213 & 48 & 36 & 135 & 45 & 9  \\
 \hline
     $(\mathbb{P}(\mathscr{F}) \times E^3_2) /T $ & 132 & 165 & 0 & 54 & 135 & 0  \\    
 \hline 
 
\end{tabular}   
\end{equation*}
\end{example}
 
\bibliographystyle{amsalpha}
\bibliography{references}
\end{document}